\definecolor{Nblue}{HTML}{0080ff}
\definecolor{NewGreen}{rgb}{0, 0.501, 0}
\definecolor{Red1}{rgb}{0.858, 0.188, 0.478}
\newtheorem{lemma}{Lemma}
\numberwithin{lemma}{section}
\newtheorem{proposition}{Proposition}
\numberwithin{proposition}{section}
\numberwithin{definition}{section}
\numberwithin{example}{section}
\newtheorem{theorem}{Theorem}
\numberwithin{theorem}{section}
\numberwithin{corollary}{section}
\numberwithin{remark}{section}
\def\bW{{\bf W}}
\def\bL{{\bf L}}
\def\bI{{\bf I}}
\def\bM{{\bf M}}
\def\bX{{\bf X}}
\def\bv{{\bf v}}
\def\e{x}
\def\i{y}
\def\w{z}
\def\r{{r}}
\def\dss{\displaystyle}
\def\diag{{\rm diag}}
\def\gavg{\epsilon_{\rm m}}
\def\gavg{\tau_{\rm m}}
\def\var{\xi}
\def\sinhc{{\rm sinhc}}
\def\sinc{{\rm sinc}}
\DeclarePairedDelimiter\floor{\lfloor}{\rfloor}
\title{Distributed Delay and Desynchronization in a Brain Network Model}
\date{ }
\author{
Isam Al-Darabsah\footnote{Department of Mathematics and Statistics, Jordan University of Science and Technology, Irbid 22110, Jordan.}\textsuperscript{ ,}{$^\S$}
\hspace{0.5cm}
Sue Ann Campbell\footnote{Department of Applied Mathematics and Centre for Theoretical Neuroscience, University of Waterloo, Waterloo, Canada.}\textsuperscript{ ,}{$^\S$}
 \hspace{0.5cm}
Bootan Rahman\footnote{
Mathematics Unit, School of Science and Engineering, University of Kurdistan Hewler (UKH), Erbil 44001, Iraq.}
\textsuperscript{,}\footnote{Email addresses: imaldarabsah@just.edu.jo (I. Al-Darabsah), sacampbell@uwaterloo.ca (S.A. Campbell), bootan.rahman\linebreak @ukh.edu.krd (B. Rahman).}
\textsuperscript{,}\footnote{Corresponding author.}
}
\begin{document}

\maketitle
\begin{abstract}
{We consider a neural field model which consists of a network of an arbitrary number of Wilson-Cowan nodes with homeostatic adjustment of the inhibitory coupling strength and time delayed, excitatory coupling.  
We extend previous work on this model to include distributed time delays with commonly used kernel distributions: delta function, uniform distribution and gamma distribution.  Focussing on networks which satisfy a constant row sum condition, we show how each eigenvalue of the connectivity matrix may be related to a Hopf bifurcation and that the eigenvalue determines whether the bifurcation leads to synchronized or desynchronized oscillatory behaviour.  We consider two example networks, one with all real eigenvalues (bi-directional ring) and one with some complex eigenvalues (uni-directional ring). In bi-directional rings, the Hopf curves are organized so that only the synchronized Hopf leads to asymptotically stable behaviour. Thus the behaviour in the network is always synchronous. In the uni-directional ring networks, however, intersection points of asynchronous and synchronous Hopf curves may occur resulting in double Hopf bifurcation points. Thus asymptotically stable synchronous and asynchronous limit cycles can occur as well as torus-like solutions which combine synchronous and asynchronous behaviour. Increasing the size of the network or the mean time delay makes these intersection points, and the associated asynchronous behaviour, more likely to occur. Numerical approaches are used to confirm the findings, with Hopf bifurcation curves plotted using \textit{Wolfram Mathematica}. These insights offer a deeper understanding of the mechanisms underlying desynchronization in large networks of oscillators.
}

\end{abstract}
{\bf Keywords:}
delay differential equation, Hopf bifurcation, sychronization, Wilson-Cowan network

\noindent {\bf MSC codes:} 
34K20, 34K18, 92B25  
\section{Introduction}
The human central nervous system is responsible for controlling all movements and biological processes that occur in the human body, and consists of highly specialized cells called nerves or neurons. There are approximately 100 billion neurons and an estimated 100 trillion synapses among them, they come in a variety of shapes, sizes and properties \cite{anderson1995}. All activities, such as movement, perception and conscious experience manifest themselves in rhythmic brain oscillations, and disruption or increased activity of neural networks can lead to various brain pathologies. Neurodegenerative diseases, including Alzheimer's disease, epilepsy, Parkinson's disease, and epileptic seizures, selectively disrupt these networks, affecting various neuronal functions \cite{gotz2009}.

The pioneering work of Wilson and Cowan \cite{wilson1972}, which described the time evolution of the mean level of activity of coupled excitatory and inhibitory populations of neurons, has been successfully used to understand many problems in the dynamics of brain networks, such as visual hallucinations \cite{ermentrout1979,pearson2016sensory}, the existence of beta oscillations in the basal ganglia \cite{kim2014} and epileptic activity \cite{meijer2015modeling}. This model and its various extensions, including modifications incorporating time delays, have played a very important role in the analysis of neural populations \cite{destexhe2009wilson,hlinka2012using,wilson2021evolution,coombes2009delays}.

In neural network modelling, time delays are used to take into account the fact that in the majority of biological neural networks, some processes do not happen instantaneously due to a finite propagation velocity of neural signals, times required for information processing, and so forth. Undoubtedly, their inclusion is a non-negligible component of the process and leads to qualitatively new behaviour in the dynamical system not observed in the same system without the time delays \cite{al2020, al2021impact, rahman2017aging,ryu2020}. 
The presence of time delays can profoundly impact system behavior, leading to instability and bifurcations or stabilization of unstable states through the use of time-delayed feedback \cite{pyragas1995control} or synchronization enhancement or suppression \cite{kyrychko2014synchronization}.
{In many cases, delays in natural systems are not fixed but instead are distributed, with the delay expressed as a convolution integral with a delay kernel \cite{atay2003distributed, wille2014synchronization, kyrychko2013amplitude, atay2006neural, liang2009phase, meyer2008distributed}. This is particularly true for complex networks and biological systems, where delays can arise from a variety of sources, including finite signal transmission and processing times or external feedback loops. Understanding the impact of such distributed delays on system behavior is crucial for developing accurate models of complex natural systems.}

Another fascinating aspect of research in neural models is the introduction of homeostatic plasticity of synaptic weights to regulate the firing rate of neural populations. Typically this is implemented by slow variations in synaptic weights, and attempts to prevent the population firing rate from becoming too high or too low. For example, the model of Vogels et al. \cite{vogels2011inhibitory} regulates the excitatory population firing rate by modifying the weights
from the inhibitory neurons to the excitatory neurons. While the fundamental
idea is to stabilize the population firing rate to an equilibrium point, introducing slowly varying synaptic weights can lead to more complex dynamics \cite{hellyer2016local, nicola2018chaos}.

We consider a network of $N$ Wilson-Cowan nodes incorporating the homeostatic plasticity model of \cite{vogels2011inhibitory,hellyer2016local}, in the form considered in \cite{al2021impact,nicola2021,nicola2018chaos}. 
These latter papers investigated  the effect of the coupling matrix structure and the presence of time delays on synchronization in this brain network model. In particular,  
the authors in \cite{nicola2018chaos} considered networks where the connectivity matrix has constant row sum and showed that certain structured networks (e.g. lattice and uni-directional ring) could exhibit desynchronized solutions if the network was large enough. By using the Master Stability Function formalism \cite{pecora1990synchronization}, they linked this desynchronization to the size of the second largest eigenvalue of the connectivity matrix. Using the same approach applied to the network with time delays in the connections between the nodes, 
the authors in \cite{al2021impact} showed that small time delays could synchronize large networks with symmetric connectivity matrices, but not for networks with nonsymmetric connectivity matrices. In fact, small networks which were synchronized without delay could be desynchronized by it.  The authors  attributed the synchronization to a change of the synchronized solution induced by the delay, however the mechanism for the desynchronization by delay was not explained. 

It is the purpose of this paper to investigate the dynamical mechanism for desynchronization in these networks and elucidate why the delay has a different effect on symmetric vs nonsymmetric connectivity matrices. To further the impact of this study we extend the model to include distributed delays. The plan for the article is as follows. Section 2 introduces the model. Section 3 analyzes the stability of the fundamental
equilibrium point and describes curves in parameter space where Hopf bifurcations may occur. Section 4 gives cases studies of these curves for
two particular examples, uni-directional and bi-directional rings, showing how the size of the network and the mean delay in the connections between the nodes influences the geometry of these curves. Based on this geometry we propose a hypothesis for the differential effect of delays on the synchronization of the network. We then use numerical bifurcation analysis and numerical simulations to support this hypothesis. In section 5 we discuss our work and future directions.

\section{Model description}
Expanding the Wilson-Cowan model \cite{wilson1972} and its modification considered in \cite{al2021impact,hellyer2016local,nicola2018chaos, nicola2021} by introducing a distribution of time delays in the excitatory connections between the nodes, yields

\begin{eqnarray}\label{eq1}
\begin{array}{rcl}
		\displaystyle\tau_1\frac{dE_i}{dt} & = & -E_i+\phi (\sum_{j=1}^{N} W_{ij}^{EE} \int_0^{\infty} E_j(t-s)g(s)\, ds- W_i^{EI}I_i ) \,, \\[2mm]
		\displaystyle\frac{dI_i}{dt} & = &   -I_i+\phi(W^{IE}E_i), \\[2mm]
		\displaystyle\tau_2\frac{dW_i^{EI}}{dt} & = & I_i(E_i-p ). 
		\end{array}
\end{eqnarray}
Here $E_i$ is the activity of the excitatory population of neurons within the $i^{th}$ node, $I_i$ is the activity of the inhibitory population in the $i^{th}$ node, $W_i^{EI}$ is the homeostatically adjusted inhibitory to excitatory weight within the $i^{th}$ node, $W_{IE}>0$ is the fixed excitatory to inhibitory weight within the $i^{th}$ node and  $W_{ij}^{EE}\ge 0$ are the (fixed) excitatory to excitatory weights between the nodes. The function  $\phi$  is the  transfer function which determines the proportion of the population of neurons which is active in node $i$. It is assumed to be sigmoidal, thus is strictly increasing and satisfies $0\le \phi(x)\le 1$.
Following \cite{al2021impact,nicola2021,nicola2018chaos} we use the logistic function	
	\begin{eqnarray}\label{eq3}
	\phi(x)=\frac{1}{1+e^{-ax}}
		\end{eqnarray}	
where $a$ controls the steepness of the sigmoid. We assume there is no excitatory self-coupling in the nodes, $W^{EE}_{ii}=0$, and that 
 the between node connectivity matrix satisfies a constant row sum condition
\begin{equation}
\sum_{j=1}^N W^{EE}_{ij}=W^E.
\label{rowsum}
\end{equation}
This corresponds to assuming that each node receives the same amount of input from the other nodes.
 
 We assume the time delays in the propagation of signals from different nodes are the same and described by the delay distribution kernel $g(.)$, satisfying	
	\begin{eqnarray}\label{eq2}
		g(s)\geq 0\qquad \text{and}\qquad  \int_0^{\infty} g(s) ds=1.
		\end{eqnarray}
We note the following properties of this distribution:
\begin{align}
     &\text{Mean delay:}\quad  \gavg=\int_0^{\infty} s g(s) ds,\label{mean_delay_def}\\
     &\text{Variance:}\quad    \var=\int _{0}^{\infty }(s-\gavg)^2\, g(s)ds.\label{variance_def}
\end{align}
 The delay distribution kernel  $g(s)$ can take different forms, here we consider three types.
\begin{itemize}
\item Dirac Delta distribution.
For this distribution, we distinguish two important cases. The first case is $g(s)=\delta(s)$, which is characterised by the following two properties (see Figure  \ref{fig_kernel_distribution}A)
\[
\delta(s)=\left\{\begin{array}{ccc}
\displaystyle{0}& \mbox{for} & s\neq 0, \\ 
\displaystyle{\infty} &  \mbox{for} & s=0,
\end{array}\right. \qquad \mbox{and} \qquad \int_{-\infty}^{\infty} f(s)\delta(s)ds=0.
\]
In this case the system \eqref{eq1} reduces to the system with no delay, studied in \cite{nicola2018chaos}.

The second case, $g(u)=\delta(u-\gavg)$, the distribution  is shifted by $\gavg$ to the right, and has the following properties  (see Figure  \ref{fig_kernel_distribution}A)
\[
\delta(s- \gavg)=\left\{\begin{array}{ccc}
\displaystyle{0}& \mbox{for} & s\neq  \gavg, \\ 
\displaystyle{\infty} &  \mbox{for} & s= \gavg,
\end{array}\right. \qquad \mbox{and} \qquad \int_{-\infty}^{\infty} f(s)\delta(s- \gavg)ds=f(\tau).
\]
 Inserting this kernel into the equation \eqref{eq1}, the coupling takes the form of a discrete time delay $(E_i(t- \gavg) - W_i^{EI}I_i )$ studied in \cite{al2021impact}.

 \item  Uniform distribution. This kernel can be written in the form  (see Figure  \ref{fig_kernel_distribution}B)
\begin{equation}\label{unieq1}
	g(s)=\left\{\begin{array}{ccc}
		\displaystyle{\frac{1}{2\sigma} }& \mbox{for} &  \gavg-\sigma \leq s \leq \gavg+\sigma, \\ \\
		\displaystyle{0} & & \mbox{otherwise},
	\end{array}\right. 
\end{equation}
which has the mean time delay $ \gavg$ and the variance  $\var= {\sigma^2}/{3}$. The parameter $\sigma$ controls the width and height of the distribution, and must satisfy $0< \sigma\le \tau_m$.

\item Gamma distribution.
We write this kernel as follows
\begin{equation}\label{gammaeq1}
	\displaystyle{g(s)=g_{\gamma}^m(s)=\frac{s^{m-1}\gamma^{m} e^{-\gamma s}}{(m-1)!}}.
\end{equation}
It is a distribution with an integer shape (order) parameter $m\in\mathbb{Z}$, ($m\geq 0$) and scale parameter $\gamma \in\mathbb{R}$, ($\gamma>0$).  The mean time delay is  $\gavg={m}/{\gamma}$  
and the variance is $\var={m}/{\gamma^2}$.
When $m=1$ the kernel is typically called {\it weak} kernel or exponential distribution kernel (see Figure  \ref{fig_kernel_distribution}C) while it is called  {\it strong} kernel when $m=2$ (see Figure  \ref{fig_kernel_distribution}D).
\end{itemize}

\begin{figure}[!htb]
     \centering
         \includegraphics[width=1\textwidth]{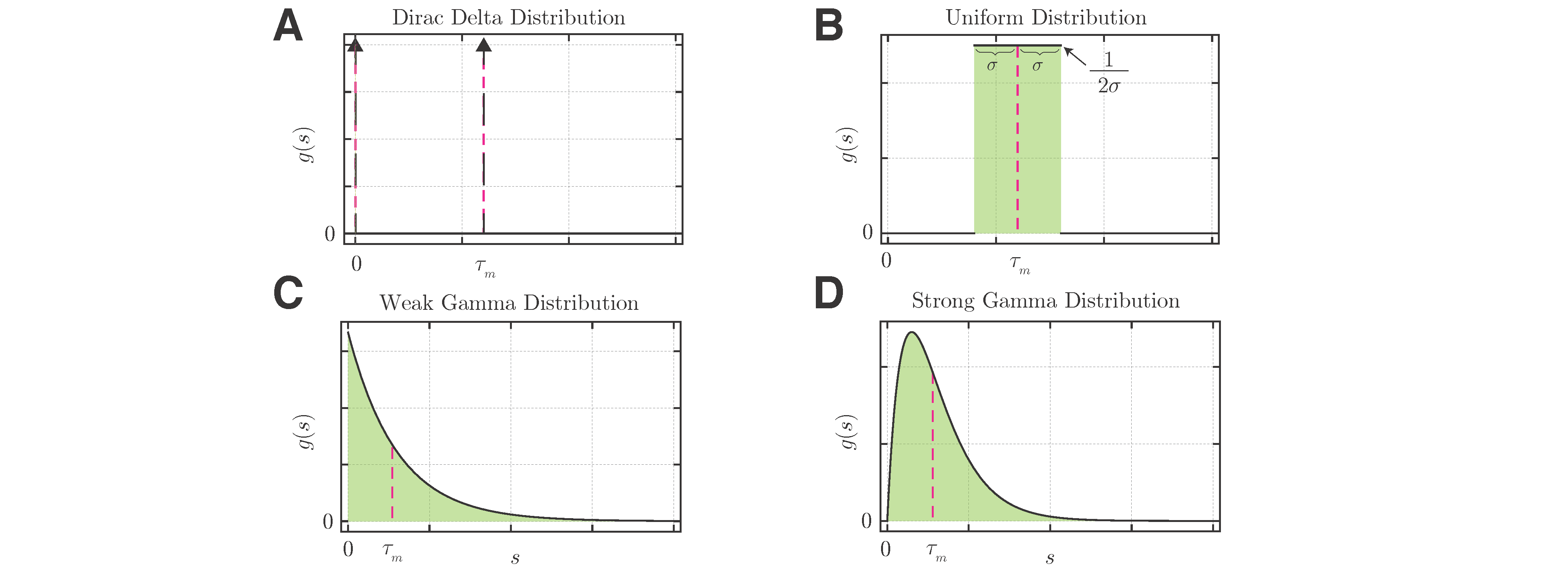}
         \caption{\textbf{Delay distribution kernel $g(s)$ with the  mean time delay $\gavg$}.
         The blue curve represents   $g(s)$, the pink vertical line is   $\gavg$, and the green region shows the area under  $g(s)$.
         \textbf{(A)} Dirac Delta kernel: No delay $g(s)=\delta(s)$ and fixed delay $g(s)=\delta(s-\gavg)$.
        \textbf{(B)} Uniform distribution kernel defined in \eqref{unieq1}.
        \textbf{(C)} Weak gamma distribution kernel defined in \eqref{gammaeq1} with  $m=1$.
        \textbf{(D)} Strong gamma distribution kernel defined in \eqref{gammaeq1} with  $m=2$. 
         }
      \label{fig_kernel_distribution}
\end{figure}

\section{Model Analysis}\label{sec:analy}
The row sum constraint \eqref{rowsum} assures that the system \eqref{eq1} admits synchronous solutions, $(E_i(t),I_i(t),W^{EI}(t))=(E(t),I(t),W^{EI}(t)),\ i=1,2,\ldots, N.$ It can be show that these solutions form an invariant subspace of \eqref{eq1}, governed by the three dimensional system 
\begin{eqnarray}\label{syncsystem}
\begin{array}{rcl}
		\displaystyle\tau_1\frac{dE}{dt} & = & -E+\phi (W^{E} \int_0^{\infty} E(t-s) g(s)\,ds- W^{EI}I ) \,, \\[2mm]
		\displaystyle\frac{dI}{dt} & = &   -I+\phi(W^{IE}E), \\[2mm]
		\displaystyle\tau_2\frac{dW^{EI}}{dt} & = & I(E-p ).
		\end{array}
\end{eqnarray}
Note that this corresponds to a single node with a delayed, excitatory self-connection.

Previous work \cite{al2021impact,nicola2021,nicola2018chaos} studied the bifurcations and dynamics of system \eqref{syncsystem}. For the case of no delay ($g(u)=\delta(u)$),  the authors in \cite{nicola2018chaos} showed that complex dynamics, including mixed mode solutions and chaos, can arise in \eqref{syncsystem} due to the slow timescale associated with the inhibitory synaptic plasticity, i.e., $\tau_2>>\tau_1$. Further, they showed
this complex dynamics is initiated at a supercritical Hopf, which was expressed as a curve $W^E_{Hopf}(W^{IE})$. Finally, they gave evidence 
that the complex dynamics of the synchronous solution gives rise
to complex dynamics in the whole network.
In \cite{nicola2021}, the authors showed that desynchronization of the nodes was more likely in large networks, and for parameter values where complex dynamics of the synchronized solution occur. 
  In the case of discrete delay, $g(u)=\delta(u-\tau)$, recent research \cite{al2021impact} has shown that even small delays can significantly impact the dynamics of the solution of \eqref{syncsystem}. Specifically, perturbation analysis shows that the Hopf curve in the parameter space $(W^{IE},W^E)$ is shifted upwards with the introduction of delay, thus stronger global excitation is needed to induce oscillations. 
  For small delays, the system dynamics are similar to the non-delay case, but for larger delays, the shift of the Hopf curve eliminates all mixed-mode, period doubled, and chaotic solutions. Based on the results of \cite{nicola2021} this elimination of complex dynamics in the synchronous subspace might be expected to lead to synchronization of the network with increasing delay. The numerical studies of \cite{al2021impact} show this is only the case for networks where the connectivity matrix has real (or close to real) eigenvalues. For networks with complex eigenvalues, increasing the delay could  {\em desynchronize} networks that were synchronized with smaller (or no) delay.

To gain understanding of this dicotomy, here we focus on analyzing the full system \eqref{eq1}.
To start, note that the full system has the equilibrium $(E_i,I_i,W^{EI}_i)=(E^*,I^*,W^{EI*}), \, i=1,2,\dots N$, as defined by 
\begin{eqnarray}\label{eq:eqmdef}
E^*=p, \, I^*=\phi(W^{IE}p), \, W^{EI*}=(W^Ep-\phi^{-1}(p))/I^*
\end{eqnarray}
We will call this the {\em synchronous equilibrium}.

Let $\e_i=E_i-E^*,\ \i_i=I_i-I^*,\ \w_i=W^{EI}_i-W_s^{EI*}$ and $\sum_{j=1}^{N}W_{ij}^{EE} = W^E$. Then the linearization about the synchronous equilibrium is
\begin{equation}\begin{array}{rcl}
\dss\tau_1 \frac{d\e_i}{dt}&=& -\e_i +
K_{1}\left(\sum_{j=1}^NW_{ij}^{EE}\int_0^\infty \e_j(t-s)g(s)\,ds -W^{EI*}\i_i-I^*\w_i \right)\\[2mm]
\dss \frac{d\i_i}{dt}&=& -\i_i +K_{2} \e_i\\[2mm]
\dss\tau_2 \frac{d\w_i}{dt}&=&I^*\e_i
\end{array}\label{linearization}\end{equation}
where  $K_{1}=\phi'(W^E E^*-{W}^{EI*}{I^*})=\phi'(\phi^{-1}(p))$,
$K_{2}=W^{IE}\phi'(W^{IE}{E}^*)=W^{IE}\phi'(W^{IE}p)$. Since $\phi$ is sigmoidal and $W^{IE}>0$, $K_1$ and $K_2$ are both positive. For the nonlinearity
$\phi(x)=1/(1+e^{-ax})$, we have $\phi'(x)=a\phi(x)(1-\phi(x))$, which gives
\begin{equation}
K_1=ap(1-p),\qquad K_2=aW^{IE}I^*(1-I^*). \label{Kdef}
\end{equation}
Let ${\bf \e}=(\e_1,\ldots,\e_N)^T$, ${\bf \i}=(\i_1,\ldots,\i_N)^T$, ${\bf \w}=(\w_1,\ldots,\w_N)^T$.
The linearization can be written in vector form as
\begin{equation}\begin{array}{rcl}
\dss\tau_1{\bf \e}'(t)&=& -{\bf \e}(t) -K_{1}\left( W^{EI*}{\bf \i}(t)+I^*{\bf \w}(t)\right)+{\bf W}^{EE}K_1\int_0^\infty {\bf \e}(t-s)g(s)\,ds\\ 
{\bf \i}'(t)&=&-{\bf \i}(t) +K_2\,{\bf \e}(t)\\
\dss\tau_2 {\bf \w}'(t)&=&I^*{\bf \e}(t)
\end{array} \label{vectorlinearization}
\end{equation}

Let ${\bf W}^{EE}$ have eigenvalues
$\hat{r}_k\in\mathbb{C}$ with corresponding eigenvectors
${\bf v}_k,\ k=1,\ldots N$. Assuming the eigenvectors are linearly independent, ${\bf W}^{EE}$
is diagonalizable. In particular, ${\bf P}^{-1}{\bf W}^{EE}{\bf P}= \diag(\hat{r}_1,\hat{r}_2,\ldots, \hat{r}_N)$, where 
 ${\bf P}=[{\bf v}_1,{\bf v}_2,\ldots,{\bf v}_N]$.

Define
\[{\bf \eta_{\e}}={\bf P}^{-1}{\bf \e},\ {\bf \eta_{\i}}={\bf P}^{-1}{\bf \i},\ {\bf \eta_{\w}}={\bf P}^{-1}{\bf \w}. \]
Then the linearization becomes
\begin{eqnarray*}
\dss\tau_1{\bf \eta_\e}'(t)
&=& -{\bf \eta_\e}(t)-K_1\left(W^{EI*}{\bf \eta_\i}(t)+I^*{\bf \eta_\w}(t)\right) +{\bf P}^{-1}{\bf W}^{EE}{\bf P}K_1\int_0^\infty  {\bf \eta_\e}(t-s)g(s)\,ds\\ 
{\bf \eta_\i}'(t)&=&-{\bf \eta_\i}(t) +K_2\,{\bf \eta_\e}(t)\\
\dss\tau_2 {\bf \eta_\w}'(t)&=&I^*{\bf \eta_\e}(t)
\end{eqnarray*}
Since ${\bf P}^{-1}{\bf W}^{EE}{\bf P}$ is diagonal, this breaks up into $N$ independent, $3D$ systems
\begin{equation}\begin{array}{rcl}
\dss\tau_1 \frac{d\eta_\e}{dt}&=& -\eta_\e 
-K_1I^*\eta_\w-K_1W^{EI*}\eta_\i+
K_1\hat{r}_k\int_0^\infty \eta_\e(t-s)g(s)\,ds 
\\[2mm]
\dss \frac{d\eta_\i}{dt}&=& -\eta_\i +K_2 \eta_\e\\[2mm]
\dss\tau_2 \frac{d\eta_\w}{dt}&=&I^*\eta_\e
\end{array}\label{componentlin}\end{equation}
where  $\hat{r}_k,\ k=1,\ldots,N$ are the eigenvalues of the connectivity matrix ${\bf W}^{EE}$.

The constant row sum constraint \eqref{rowsum} means the connectivity matrix can be written 
$\bW^{EE}=W^E {\bf L}^{EE}$ where ${\bf L}^{EE}$ has constant row sum $1$.  
It follows that each eigenvalue of $\bW^{EE}$ can be written $\hat{r}_k=W^E r_k$, where
$r_k$ is an eigenvalue of $\bL^{EE}$, and the $k^{th}$ subsystem of the linearization \eqref{componentlin} can be rewritten
\begin{equation}\begin{array}{rcl}
\dss\tau_1 \frac{d\eta_\e}{dt}&=& -\eta_\e 
-K_1I^*\eta_\w-K_1W^{EI*}\eta_\i
+K_1W^E{r}_k\int_0^\infty \eta_\e(t-s)g(s)\,ds 
\\[2mm]
\dss \frac{d\eta_\i}{dt}&=& -\eta_\i +K_2 \eta_\e\\[2mm]
\dss\tau_2 \frac{d\eta_\w}{dt}&=&I^*\eta_\e
\end{array}\label{componentlinb}\end{equation}
In vector form this system can be expressed as 
 \begin{eqnarray}\label{eq8}
\frac{d\mathbf{u}(t)}{dt}=\bL_0\mathbf{u}(t) + W^E r_k\bM \int_0^{\infty} \mathbf{u}(t-s) g(s)\,ds,
\end{eqnarray}
where
\begin{eqnarray}\label{matrixLoM}
\bL_0=\left(
\begin{array}{cccc}
	-\frac{1}{\tau_1}& -\frac{1}{\tau_1}K_1W^{EI*} & -\frac{1}{\tau_1}K_1I^* \\
	K_2 & -1 & 0  \\
	\frac{1}{\tau_2}I^* & 0 & 0   
\end{array}
\right), \quad
\bM=\left(
\begin{array}{cccc}
	\frac{1}{\tau_1}K_1& 0& 0 \\
	0 & 0 & 0  \\
	0& 0 & 0   
\end{array}
\right)
\end{eqnarray}

The characteristic matrix of this system is 
$\det[\lambda \bI-\bL_0-r_kW^E\hat{G}(\lambda)\bM]$,
where $\bI$ is the $3\times 3$ identity matrix. It follows that the characteristic equation for the $k^{th}$ subsystem is
  \begin{eqnarray}\label{charac_k}
 C_k(\lambda)=P(\lambda)-r_kQ(\lambda)=\lambda^3+p_2\lambda^2+p_1\lambda+p_0- r_k W^E q\lambda(\lambda+1)\hat{G}(\lambda)=0
  \end{eqnarray}
  where 
\begin{neweq}
p_2&=\cfrac{1}{\tau_1}+1,\quad
p_1=\cfrac{1}{\tau_1}+\cfrac{{W}^{EI*}K_1K_2}{\tau_1} +\cfrac{{I^*}^2K_1}{\tau_1\tau_2},\quad 
p_0 =\cfrac{{I^*}^2K_1}{\tau_1\tau_2}, \quad
q=\cfrac{K_1}{\tau_1}
\label{eq:pdef}
\end{neweq}
 and 
\[\widehat{G}(\lambda)=\int_0^{\infty} e^{-\lambda s}g(s)ds\]
is the Laplace transform of the distribution kernel $g$. Recall that $K_1=\phi'(\phi^{-1}(p)),\  K_2=W^{IE}\phi'(W^{IE}p)$ and the equilibrium values $I^*,W^{EI*}$ are defined in \eqref{eq:eqmdef}.
Since $K_1,K_2,\tau_1,\tau_2$ are positive it follows that $p_0,p_2$ and $q$ are all positive. We only consider $W^{EI*}\ge 0$ for biological realism, thus $p_{1}$ is also positive.

To summarize, the characteristic equation of the linearization of the whole network about the synchronous equilibrium is
\begin{equation}  \prod_{k=1}^N C_k(\lambda)=0. \label{charac} \end{equation}
where $C_k$ is defined in \eqref{charac_k}.

Now, since $\bL^{EE}$ has row sum $1$, it has $r_N=1$ and $|r_k|\le 1,\ k=1,\ldots, N-1$, by the Perron-Frobenius Theorem. The subsystem with $k=N$, corresponds to the linearization of the synchronous equilibrium in the synchronous subspace, i.e., eq. \eqref{syncsystem}. Thus bifurcations associated with the factor \eqref{charac_k} with $r_k=1$ will yield bifurcations of synchronous solutions. The associated Hopf bifurcation, which leads to synchronous oscillations in the network, was studied in depth by \cite{al2021impact,nicola2021}. We will call this the {\em synchronous Hopf bifurcation}.
 We are interested in Hopf bifurcations coming from the other subsystems, corresponding to other eigenvalues of $\bL^{EE}$: $r_1,r_2\,\ldots,r_{N-1}$. Our hypothesis
is that these bifurcations, or their interactions with the synchronous Hopf, can give rise to desynchronized solutions.

In \cite{al2021impact,nicola2021} several different connectivity matrices were studied. In the following, we focus on the case where $\bL^{EE}$ is circulant, which includes the bi-directional and uni-directional rings studied by \cite{al2021impact,nicola2021} and will allow us to determine the synchronization pattern of the solutions arising from bifurcations corresponding to other factors of the characteristic equation.

If the matrix $\bL^{EE}$ is circulant, we can leverage the work of \cite{wang2017}, which is formulated for systems in a slightly different form than \eqref{eq1}, but which can be carried over here as we now show.
 Let ${\bf L}^{EE}={\rm circ}\{0,l_1,\cdots,l_{N-1}\}$  where $l_j\in\mathbb R$ and  the first component is zero since we assume no self-coupling.  The eigenvalues of this matrix are 
\[ r_k=\sum_{j=0}^{N-1} l_j \rho_k^{j},\quad
 k=1,\ldots,N\]
where $\rho_k=e^{i\frac{2\pi k}{N}}$ are the $N^{th}$ roots of unity. The corresponding eigenvectors are 
\[ {\bf v}_k=\begin{pmatrix} 1\\ \rho_k\\ \rho_k^2\\ \vdots \\ \rho_k^{N-1} \end{pmatrix}.\]
\begin{proposition}\label{prop_evector}
If ${\bf L}^{EE}$ is circulant and $\lambda$ is root of \eqref{charac_k}, 
then the corresponding solution of \eqref{vectorlinearization} is 
\begin{equation}
     \begin{bmatrix}{\bf x} \\ {\bf y}\\ {\bf z} \end{bmatrix}= e^{\lambda t}\begin{bmatrix} \xi_x{\bf v}_k\\ \xi_y{\bf v}_k \\ \xi_z{\bf v}_k \end{bmatrix},
 \label{vecsoln}
 \end{equation}
where ${\bf v}_k$ is the eigenvector of ${\bf L}^{EE}$ corresponding to the eigenvalue ${r}_k$,
and ${\bf \xi}=(\xi_x,\xi_y,\xi_z)^T$ satisfies 
\[\left[\lambda \bI - \bL_0 - r_k W^E\widehat{G}(\lambda) \bM\right]\xi={\bf 0}. \]
\end{proposition}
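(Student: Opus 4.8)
The plan is to substitute the ansatz \eqref{vecsoln} directly into the vector linearization \eqref{vectorlinearization}, using that $\bv_k$ is an eigenvector of $\bW^{EE}=W^E\bL^{EE}$ with eigenvalue $\hat{r}_k=W^E r_k$. Writing ${\bf x}(t)=\xi_x e^{\lambda t}\bv_k$, ${\bf y}(t)=\xi_y e^{\lambda t}\bv_k$, ${\bf z}(t)=\xi_z e^{\lambda t}\bv_k$, the only term needing attention is the distributed-delay convolution, for which
\[
\int_0^{\infty}{\bf x}(t-s)\,g(s)\,ds
=\xi_x\bv_k e^{\lambda t}\int_0^{\infty}e^{-\lambda s}g(s)\,ds
=\xi_x\bv_k e^{\lambda t}\,\widehat{G}(\lambda),
\]
by the definition of $\widehat{G}$ (convergence of this integral for the relevant $\lambda$ being part of the standing assumptions on $g$). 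After substitution, and using $\bW^{EE}\bv_k=W^E r_k\bv_k$, each of the three equations of \eqref{vectorlinearization} becomes a scalar multiple of the single vector $e^{\lambda t}\bv_k$. Cancelling $e^{\lambda t}$ and the nonzero vector $\bv_k$ leaves the three scalar relations
\[
\tau_1\lambda\xi_x=-\xi_x-K_1W^{EI*}\xi_y-K_1I^*\xi_z+W^E r_kK_1\widehat{G}(\lambda)\xi_x,\qquad
\lambda\xi_y=-\xi_y+K_2\xi_x,\qquad
\tau_2\lambda\xi_z=I^*\xi_x.
\]
Dividing the first relation by $\tau_1$ and the third by $\tau_2$ and comparing with the entries of $\bL_0$ and $\bM$ in \eqref{matrixLoM}, this system is exactly $\left[\lambda\bI-\bL_0-r_kW^E\widehat{G}(\lambda)\bM\right]\xi={\bf 0}$, the asserted condition on $\xi$.

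It then remains to observe that a nontrivial $\xi$ exists precisely because $\lambda$ is a root of \eqref{charac_k}. Indeed, by construction $C_k(\lambda)=\det\!\left[\lambda\bI-\bL_0-r_kW^E\widehat{G}(\lambda)\bM\right]$, so $C_k(\lambda)=0$ makes the $3\times3$ coefficient matrix singular and its kernel contains some nonzero $\xi=(\xi_x,\xi_y,\xi_z)^T$; with this choice \eqref{vecsoln} is a genuine, nontrivial solution of \eqref{vectorlinearization}. The same conclusion can also be reached through the diagonalization already performed en route to \eqref{componentlinb}: the transformation by $\bP^{-1}$ decouples \eqref{vectorlinearization} into $N$ independent $3D$ blocks, the root $\lambda$ of \eqref{charac_k} supplies a solution $(\xi_x,\xi_y,\xi_z)^T e^{\lambda t}$ of the $k$-th block with all other blocks set to zero, and mapping back with $\bP$ — whose $k$-th column is $\bv_k$ — reproduces \eqref{vecsoln}.

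There is no real obstacle here: the statement is essentially the bookkeeping fact that the circulant (Fourier) eigenvectors diagonalize the between-node coupling, so the $N$-node linearization collapses onto the scalar factors $C_k$. The only points requiring a little care are tracking the $1/\tau_1$ and $1/\tau_2$ scalings when matching the reduced scalar system against $\bL_0$ and $\bM$, and correctly pulling the factor $\widehat{G}(\lambda)$ out of the convolution integral — the step where one uses that $\lambda$ lies in the half-plane of convergence of the Laplace transform of $g$ (automatic for the delta and uniform kernels, and valid for ${\rm Re}\,\lambda>-\gamma$ in the gamma case).
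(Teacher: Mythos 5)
Your proof is correct and follows essentially the same route as the paper: substitute the ansatz into \eqref{vectorlinearization}, use the eigenvector property of $\bv_k$ and the Laplace transform to pull out $\widehat{G}(\lambda)$, and reduce to the $3\times 3$ condition $[\lambda\bI-\bL_0-r_kW^E\widehat{G}(\lambda)\bM]\xi={\bf 0}$. Your added remark that $C_k(\lambda)=0$ guarantees a nontrivial $\xi$ is a small but welcome piece of bookkeeping that the paper leaves implicit.
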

\begin{proof}
We follow the idea of the proof in section 1 of \cite{wang2017}.
Let ${\bf X_i}=(x_i,y_i,z_i)^T$. Then \eqref{vectorlinearization} can be written
\begin{equation}
{\bX_i}'(t)={\bf L}_0{\bX_i}+\sum_{j=1}^{N}W^{EE}_{ij} \bM\int_{0}^\infty {\bX}_j(t-s)g(s)\, ds.  
\label{linnewform}
\end{equation}

Note that the solution \eqref{vecsoln} can be expressed in these
new coordinates as $\bX_i=\Psi_i(t)$ where
\begin{equation}
\Psi_i(t)=e^{\lambda t} \bv_{k,i}\, \xi=e^{\lambda t} \rho_k^{i-1} \xi
    \label{solnnewform}
\end{equation}
Substituting this into the RHS of \eqref{linnewform} gives
\begin{eqnarray*} \bL_0 e^{\lambda t}\bv_{k,i}\, \xi + \sum_{j=1}^{N}W^{EE}_{ij} \bv_{k,j} \bM \xi e^{\lambda t}\int_{0}^\infty e^{\lambda s}g(s)\, ds. 
&=& e^{\lambda t} \left[\bL_0\bv_{k,i}\, \xi + W^E\sum_{j=1}^{N}L^{EE}_{ij}\bv_{k,j} \bM \xi \widehat{G}(\lambda)\right]\\
&=& e^{\lambda t} \left[\bL_0 \rho_k^{i-1} \xi + W^E\widehat{G}(\lambda)\left({\bf L}^{EE}{\bf v}_k\right)_i \bM \xi \right]\\
&=& e^{\lambda t} \bv_{k,i} \left[\bL_0  + W^E r_k \widehat{G}(\lambda) \bM \right]\xi \\
&=& \lambda e^{\lambda t} \bv_{k,i}\,\xi
\end{eqnarray*}
Thus $\bX_i=\Psi_i(t)$ is a solution of \eqref{linnewform} and
hence \eqref{vecsoln} is a solution of \eqref{vectorlinearization}.
\end{proof}
This proposition can be used to predict the structure of solutions associated with bifurcations. For example, if $k=N$ ($r_k=1$), then ${\bf v}_k=(1,1,\ldots,1)^T$, which verifies that the solutions in this subsystem correspond to synchronous solutions. Similarly, if $k=1$ then ${\bf v}_k=(1,\rho_1,\rho_2,\ldots,\rho_{N-1})^T$, i.e., the entries are the $N^{th}$ roots of unity. If we visualize the nodes on a ring, then this solution will be such that the nodes are equally spaced around the ring. This is sometimes called a splay state or, in the case of periodic solutions, a travelling wave solution. Equivariant bifurcation theory \cite{GSS88,Wu98} shows that this structure observed in the linearization carries over to bifurcating solutions. So if a Hopf bifurcation occurs corresponding to an eigenvalue for the subsystem $k=N$, then the resulting solutions are periodic orbits with all the nodes oscillating synchronously, while if a Hopf bifurcation occurs corresponding an eigenvalue for the subsystem $k=1$, the resulting solutions are periodic orbits with the nodes phase-locked with phase difference $1/N$ of the period. More detail about the application of equivariant bifurcation theory to the study of phase-locked solutions in coupled cell systems with time delay can be found in \cite{GH03,GH07,wang2017,Wu98}.

Recall that since ${\bf L}^{EE}$ has row sum $1$, it always has the eigenvalue $1$. In the circulant case, we see directly that $r_N=\sum_{k=0}^{N-1} l_k=1$. Other eigenvalues come in complex conjugate pairs: $r_{N-j}=\bar{r}_j$, except for special cases. For example, if $N$ is even then  $r_{N/2}=\sum_{k=0}^{N-1} (-1)^k l_k$. We can write the characteristic equation in the form
\begin{eqnarray*}
0=C(\lambda)&=&C_{N}(\lambda)C_{N/2}(\lambda)
\prod_{k=1}^{\floor*{N/2-1}} C_{k}(\lambda)C_{N-k}(\lambda).\\
\end{eqnarray*}
Since $C_N(\lambda)$ and $C_{N/2}(\lambda)$ have real coefficients, if $\lambda$ is a root of either factor then so is $\bar{\lambda}$. The other $C_k(\lambda)$ in general have complex roots with $C(\lambda)=0$ implying 
$C_{N-k}(\bar{\lambda})=0.$

We will focus the special case of uni-directional and bidirectional rings as this will give examples of the two cases we want to investigate, real and complex eigenvalues. 
In the case of a unidirectional ring, 
${\bf L}^{EE}={\rm circ}\{0,1,\cdots,0,0\}$ and the eigenvalues are
$ r_j= \rho_j^{N-1} = e^{\frac{2\pi ij}{N}}$, which are in general complex. 
If $\bL^{EE}$ is circulant and symmetric 
(e.g.~the bidirectional ring or lattice) then
${\bf L}^{EE}={\rm circ}\{0,l_1,\cdots,l_2,l_1\}$  and the eigenvalues are
real. In the case of a symmetric bi-directional loop, 
${\bf L}^{EE}={\rm circ}\{0,1/2,0,\cdots,0,1/2\}$ and the eigenvalues are
$ r_j=\frac12(\rho_j+\rho_j^{N-1}) = \frac12(e^{\frac{2\pi ij}{N}}+ e^{-\frac{2\pi ij}{N}})
=\cos(\frac{2\pi j}{N})$.  Note that $r_{N-j}=r_j$.

\subsection{Stability}
To begin we establish some parameter values where we know the synchronous equilibrium point is asymptotically stable.
\begin{theorem}\label{thm:stabnodelay}
If $g(s)=\delta(s)$ and
   $ W^EK_1 <1$
then the equilibrium $(p,I^*,W^{EI*})$ is asymptotically stable.
\end{theorem}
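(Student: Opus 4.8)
\emph{The plan.} With $g(s)=\delta(s)$ one has $\widehat G(\lambda)=\int_0^\infty e^{-\lambda s}\delta(s)\,ds=1$, so \eqref{eq1} is an ODE system and, by \eqref{charac}, the spectrum of the linearization at $(p,I^*,W^{EI*})$ is exactly $\bigcup_{k=1}^N\{\lambda:C_k(\lambda)=0\}$, where $C_k(\lambda)=P(\lambda)-r_kQ(\lambda)$ with $P(\lambda)=\lambda^3+p_2\lambda^2+p_1\lambda+p_0$ and $Q(\lambda)=W^Eq\,\lambda(\lambda+1)$, and $r_k$ runs over the eigenvalues of $\bL^{EE}$, so $|r_k|\le1$. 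Hence it suffices to prove that for every complex $r$ with $|r|\le1$ the cubic $C(\lambda):=P(\lambda)-rQ(\lambda)$ has all three roots in the open left half-plane; asymptotic stability of the equilibrium then follows from the principle of linearized stability.

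\emph{Preliminaries.} Put $c:=W^Eq=W^EK_1/\tau_1$. The hypothesis $W^EK_1<1$ says exactly $c<\tfrac{1}{\tau_1}=p_2-1$, and from \eqref{eq:pdef} (using $W^{EI*}\ge0$) one has $p_0>0$, $p_2>1$, and $p_1-p_0=\tfrac{1}{\tau_1}+\tfrac{W^{EI*}K_1K_2}{\tau_1}\ge\tfrac{1}{\tau_1}>c$. I would first record two Routh--Hurwitz facts: $P$ is strictly Hurwitz, since $p_2>0$, $p_0>0$ and $p_1p_2>p_1>p_0$; and $C_N(\lambda)=P(\lambda)-Q(\lambda)=\lambda^3+(p_2-c)\lambda^2+(p_1-c)\lambda+p_0$ is Hurwitz, since $p_2-c>1>0$, $p_0>0$, and $(p_1-c)(p_2-c)>(p_1-c)>p_0$ (the last because $p_1-c=p_0+(p_1-p_0-c)>p_0$). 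In particular neither $P$ nor $C_N$ vanishes on the imaginary axis.

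\emph{The key estimate and Rouch\'e.} The heart of the argument is the claim $|P(i\omega)|>|Q(i\omega)|$ for all real $\omega\neq0$ (at $\omega=0$, $P(0)=p_0\ne0=Q(0)$). Since $P=C_N+Q$, on the imaginary axis $|P(i\omega)|^2-|Q(i\omega)|^2=|C_N(i\omega)|^2+2\,\mathrm{Re}\!\big(\overline{C_N(i\omega)}\,Q(i\omega)\big)$, and a direct computation gives
\[\mathrm{Re}\!\big(\overline{C_N(i\omega)}\,Q(i\omega)\big)=c\,\omega^4\Big(\tfrac{1}{\tau_1}-c\Big)+c\,\omega^2\big(p_1-p_0-c\big)\ \ge\ 0,\]
because $c\ge0$, $\tfrac{1}{\tau_1}-c>0$ and $p_1-p_0-c>0$. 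Hence $|P(i\omega)|^2-|Q(i\omega)|^2\ge|C_N(i\omega)|^2>0$, proving the claim. Consequently, for any $|r|\le1$, on the imaginary axis $|P(\lambda)|>|Q(\lambda)|\ge|rQ(\lambda)|$, while on a large semicircle $\{|\lambda|=R,\ \mathrm{Re}\,\lambda\ge0\}$ the cubic $P$ dominates the at-most-quadratic $rQ$; so Rouch\'e's theorem on the right half-disk shows $C=P-rQ$ has the same number of zeros there as $P$, namely none. Letting $R\to\infty$, every $C_k$ is Hurwitz and the proof is complete. (Equivalently one can homotope $r\mapsto0$: the estimate forbids any root of $C(\cdot;tr)$ from touching the imaginary axis, and $C(\cdot;0)=P$ is Hurwitz.)

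\emph{Main obstacle.} The only non-routine work is the imaginary-axis estimate. For real eigenvalues $r_k$ one could bypass it by applying Routh--Hurwitz directly to $C_k$: the delicate condition $(p_1-r_kc)(p_2-r_kc)>p_0$ is, as a function of $r_k$, an upward parabola whose vertex lies beyond $r_k=1$ (since $p_1+p_2>2/\tau_1>2c$), hence is minimized on $[-1,1]$ at $r_k=1$, where it reduces to the $C_N$ computation above. But for complex $r_k$ — precisely the case relevant to the uni-directional ring — the coefficients of $C_k$ are complex, real Routh--Hurwitz does not apply, and one genuinely must control $C_k$ along the whole imaginary axis, which is what the displayed evaluation of $\mathrm{Re}(\overline{C_N(i\omega)}\,Q(i\omega))$ accomplishes. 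I expect that evaluation, together with the bookkeeping $p_1-p_0-c>0$, to be the only steps requiring care.
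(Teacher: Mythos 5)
Your proof is correct, and it reaches the result by a different key mechanism than the paper. The paper (Appendix~\ref{appendix_stabnodelay}) argues factor by factor: Lemma~\ref{lem:zerort} rules out pure imaginary roots of \eqref{charac_eq_ODE} for each fixed $r_k=\alpha+i\beta$ under the condition $\alpha W^EK_1<1$, by eliminating between the real/imaginary part equations \eqref{eq:H01}--\eqref{eq:H02} with a $\beta=0$ versus $\beta\neq0$ case split, and then Proposition~\ref{thm:negrtsfactorC} uses a continuation in $W^E$ from the Routh--Hurwitz-stable decoupled polynomial $P$; the theorem follows since $|r_k|\le1$ implies $\Re(r_k)W^EK_1<1$. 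You instead prove a single uniform imaginary-axis dominance $|P(i\omega)|>|Q(i\omega)|$, via the identity $|P|^2-|Q|^2=|C_N|^2+2\,\mathrm{Re}(\overline{C_N}Q)$ together with Routh--Hurwitz for $C_N$, and then invoke Rouch\'e (equivalently a homotopy in $r$) to dispose of all $|r|\le1$ at once. Both proofs share the same skeleton (Hurwitz at zero coupling plus ``no imaginary-axis crossing'' along a scaling of the coupling term), but your estimate avoids the case analysis on $\beta$ and treats complex eigenvalues uniformly, at the price of using the modulus bound $|r_k|\le1$; the paper's per-eigenvalue version is slightly sharper in that only $\Re(r_k)$ enters, which is precisely what it reuses later (e.g.\ Proposition~\ref{prop_negalp} and the lower bound \eqref{curveorder} on the Hopf curves). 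Your side remarks (the Routh--Hurwitz bookkeeping for $P$ and $C_N$, and the parabola argument for real $r_k$) check out, and your reliance on the factorization \eqref{charac} (diagonalizability of $\bW^{EE}$) is the same implicit assumption the paper makes.
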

\begin{proof}
The proof can be found in Appendix \ref{appendix_stabnodelay}. 
\end{proof}
\begin{theorem}\label{thm:stabsmalldelay}
If  $g(s)$ is a Dirac delta, uniform, weak or strong gamma distribution and $ W^EK_1 <1$ then the equilibrium $(p,I^*,W^{EI*})$ is asymptotically stable for sufficiently small mean delay $\tau_m$.
\end{theorem}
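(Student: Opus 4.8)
\emph{Proof proposal.} The plan is to reduce the claim to a continuity argument for the roots of the characteristic equation \eqref{charac}. Each factor $C_k$ is of retarded type (its top-order term $\lambda^3$ carries no delay), so asymptotic stability of the equilibrium is equivalent to every root of \eqref{charac} having negative real part; hence it suffices to show that, for $\tau_m$ small, no $C_k(\lambda)$ has a root with $\Re\lambda\ge 0$. Write $\widehat G(\lambda;\tau_m)$ to make the dependence of the kernel's Laplace transform on the mean delay explicit; then $\widehat G(\lambda;0)\equiv 1$, so $C_k(\lambda;0)=\lambda^3+p_2\lambda^2+p_1\lambda+p_0-r_k W^E q\,\lambda(\lambda+1)$ is exactly the characteristic factor of the no-delay system. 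By Theorem~\ref{thm:stabnodelay}, the hypothesis $W^EK_1<1$ guarantees that every root of every $C_k(\cdot;0)$, $k=1,\dots,N$, lies in the open left half-plane.

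First I would establish a \emph{$\tau_m$-uniform a priori bound} on right-half-plane roots. Since $g$ is a probability density on $[0,\infty)$, for $\Re\lambda\ge 0$ we have $|\widehat G(\lambda;\tau_m)|\le\int_0^\infty e^{-(\Re\lambda)s}g(s)\,ds\le 1$, and $|r_k|\le 1$ by Perron--Frobenius; hence $|r_kW^Eq\,\lambda(\lambda+1)\widehat G(\lambda;\tau_m)|\le W^Eq\,|\lambda|(|\lambda|+1)$ there, while $|P(\lambda)|\ge|\lambda|^3-p_2|\lambda|^2-p_1|\lambda|-p_0$. Because $p_0,p_1,p_2,q$ are independent of $\tau_m$, there is $R>0$, independent of $\tau_m$, such that $|P(\lambda)|>W^Eq\,|\lambda|(|\lambda|+1)$, and so $C_k(\lambda;\tau_m)\ne 0$, whenever $\Re\lambda\ge 0$ and $|\lambda|\ge R$. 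Thus every root of \eqref{charac} with $\Re\lambda\ge 0$ lies in the fixed half-disc $D=\{\Re\lambda\ge 0:\ |\lambda|\le R\}$.

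Next I would check that $\widehat G(\lambda;\tau_m)\to 1$ as $\tau_m\to 0^+$, uniformly on compact subsets of $\mathbb C$, for each kernel: $\widehat G=e^{-\lambda\tau_m}$ for the shifted Dirac delta; $\widehat G=e^{-\lambda\tau_m}\sinh(\sigma\lambda)/(\sigma\lambda)$ for the uniform kernel, where $0<\sigma\le\tau_m$ forces the half-width $\sigma\to 0$; and $\widehat G=(\gamma/(\gamma+\lambda))^m$ for the gamma kernel of order $m$, with $\gamma=m/\tau_m\to\infty$, whose only pole $-\gamma$ eventually leaves any fixed compact set (so $C_k(\cdot;\tau_m)$ is holomorphic on a neighbourhood of $\overline D$ once $\tau_m$ is small). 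Consequently $C_k(\lambda;\tau_m)\to C_k(\lambda;0)$ uniformly on $\overline D$. Let $\mu=\min_{1\le k\le N}\min_{\lambda\in\partial D}|C_k(\lambda;0)|$, which is positive because each $C_k(\cdot;0)$ is continuous and nonvanishing on the compact set $\overline D\subset\{\Re\lambda\ge 0\}$ by Theorem~\ref{thm:stabnodelay}. Choosing $\tau_m^*>0$ so that $\sup_{\lambda\in\partial D}|C_k(\lambda;\tau_m)-C_k(\lambda;0)|<\mu$ for all $k$ whenever $0<\tau_m<\tau_m^*$, Rouch\'e's theorem (equivalently, continuity of the spectrum of a quasipolynomial in a parameter) gives that $C_k(\cdot;\tau_m)$ has the same number of zeros in $D$ as $C_k(\cdot;0)$, namely none. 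Combined with the a priori bound, no factor of \eqref{charac} has a root with $\Re\lambda\ge 0$, so the equilibrium is asymptotically stable for $0\le\tau_m<\tau_m^*$.

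The one place that needs genuine care is the a priori bound: it is essential that $\deg P=3$ strictly exceeds the degree $2$ of the polynomial $q\,\lambda(\lambda+1)$ multiplying $\widehat G$, and that $|\widehat G|\le 1$ on $\{\Re\lambda\ge 0\}$, since this is what makes $R$ independent of $\tau_m$ and prevents roots from escaping to infinity in the right half-plane as $\tau_m\to 0$. After that, the remainder is a routine continuity/Rouch\'e argument, modulo the elementary limits $\widehat G\to 1$ (noting for the uniform kernel that $\sigma\le\tau_m$ forces $\sigma\to 0$, and for the gamma kernels that the pole recedes to $-\infty$).
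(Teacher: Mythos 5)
Your proof is correct, and its skeleton matches the paper's: both reduce to the zero-delay result (Theorem~\ref{thm:stabnodelay}), both must prevent roots from entering the closed right half-plane from infinity as $\tau_m\to 0^+$, and both verify kernel-specific facts about $\widehat G$. The difference is the technical device. The paper does not argue from scratch: it extends the proof of a standard small-delay theorem (Smith, Theorem 4.4; Els'gol'ts--Norkin), so it only needs to check two hypotheses for each kernel --- continuity of $h_k(\lambda,\tau_m)$ and the sequential condition $\widehat G(z_n,\tau_n)/z_n\to 0$ along zeros with $\Re z_n\ge\xi$, $|z_n|\to\infty$, $\tau_n\to 0$ --- which it does with explicit estimates (including the point, which you also noted, that $\sigma_n\le\tau_n$ forces $\sigma_n\to 0$ for the uniform kernel). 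You instead give a self-contained argument: the bound $|\widehat G(\lambda;\tau_m)|\le 1$ on $\Re\lambda\ge 0$ together with $\deg P=3>2$ confines all right-half-plane roots to a half-disc of radius $R$ independent of $\tau_m$ and of $k$ (since $|r_k|\le 1$), and then uniform convergence $\widehat G\to 1$ on compacts plus Rouch\'e excludes roots there for small $\tau_m$. Your route is somewhat more elementary and arguably cleaner for the stability question, since only $\Re\lambda\ge 0$ matters; the paper's route yields slightly more (it tracks all roots with real part above any fixed $\xi$ back to the unperturbed roots) at the cost of leaning on the cited theorem. One small bookkeeping remark: the fact you actually need from the zero-delay case is that every factor $C_k(\cdot;0)$ has no roots with $\Re\lambda\ge 0$, which is the content of Proposition~\ref{thm:negrtsfactorC} combined with $|r_k|\le 1$ (this is what underlies Theorem~\ref{thm:stabnodelay}); citing that proposition directly makes your positive Rouch\'e minimum $\mu$ on $\partial D$ immediate.
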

\begin{proof}
The proof can be found in Appendix \ref{appendix_stabsmalldelay}.   
\end{proof}

In \cite{al2021impact,nicola2021,nicola2018chaos} the loss of stability of the equilibrium was studied in the case of a single self-coupled node, which corresponds to studying stability within the synchronous subspace.  By studying \eqref{charac_k} with $r_k=1$, i.e., $C_N(\lambda)=0$, they showed that the equilibrium is asymptotically stable with $W^E<1/K_1$ and loses stability in a Hopf bifurcation as $W^E$ is increased. Here we consider the possibility of destabilization outside of this subspace. Since $p_0>0$, the equation \eqref{charac_k} has no zero roots for any $k$, destabilization can only occur if some factor of the  the characteristic equation has a pure imaginary eigenvalue. In the following we characterize the parameter values where this occurs.

\subsection{Curves of Pure Imaginary Eigenvalues}\label{ssec:Hcurves}
Suppose that 
$\lambda=i \omega$ ($\omega>0$ and $i=\sqrt{-1}$) is a purely imaginary root of \eqref{charac_k}, where $r_k=\alpha+i\beta$. We can restrict our attention to $\omega>0$ since  $\lambda=-i\omega$ will be a root of $C_k(\lambda)$ if $\beta=0$ and $\lambda=-i\omega$ will be a root of $C_{N-k}(\lambda)$ of $\beta\ne 0$.

 The real and imaginary parts of ${C}_k(i\omega)=0$ satisfy 
\begin{align}
p_0-p_2 \omega^2&=W^E q\,\,\omega  \Big( (\alpha-\beta\omega) S(\omega)-(\alpha\omega+\beta) C(\omega)\Big)\label{eq:ReIm_1e} \\
p_1\omega-\omega^3&= W^E q\, \, \omega  \Big((\alpha\omega+\beta) S(\omega)+ (\alpha-\beta\omega) C(\omega)\Big)\label{eq:ReIm_2e}.
\end{align}
where 
\begin{equation}\label{sincos}
   S(\omega)=\int_{0}^{\infty}\sin \! \left(\omega  s \right) g \! \left(s \right)d s \quad\text{and}\quad
C(\omega)=\int_{0}^{\infty}\cos \! \left(\omega  s \right) g \! \left(s \right)d s. 
\end{equation}
Further, note from \eqref{eq:pdef} that $p_1=p_{1a}+p_{1b} W^E q$ where
\begin{align}
 p_{1a}&=\frac{1}{\tau_1}\left[1+\frac{I^{*2}K_1}{\tau_2}-\frac{\phi^{-1}(p)K_1K_2}{I^*}\right]
=p_0+\frac{1}{\tau_1}\Big[1-\frac{\phi^{-1}(p)\,\phi'(\phi^{-1}(p))\,W^{IE}\phi'(W^{IE}p)}{I^*}\Big] \label{eq:p1a}\\ p_{1b}&=\frac{pK_2}{I^*}=\frac{p\,W^{IE}\phi'(W^{IE}p)}{I^*}. \label{eq:p1b}
\end{align}
With the logistic nonlinearity, $\phi(x)=(1+e^{-ax})^{-1}$, these simplify to
\begin{align}
 p_{1a}&=p_0+\frac{1}{\tau_1}\Big[1+(1-p)\ln\left(\frac{p}{1-p}\right)apW^{IE}(1-I^*)\Big] \label{eq:p1ab}\\ p_{1b}&=apW^{IE}(1-I^*), \label{eq:p1bb}
\end{align}
from which it can be shown that
$0\le p_{1b}<0.2785$ and $p_{1a}>p_0$.

We can use equations \eqref{eq:ReIm_1e}-\eqref{eq:ReIm_2e} to characterize the curves of pure imaginary eigenvalues in terms of the connection weights $W^E$ and $W^{IE}$. We can then study the effect of the delay and other parameters on these curves. These curves may be associated with a Hopf bifurcation of the synchronous equilibrium point. Due to the complexity of the expressions, we do not attempt to verify the conditions of the Hopf bifurcation Theorem analytically. In the next section we will verify using numerical bifurcation software that Hopf bifurcations do occur along these curves. Thus we will refer to the curves of pure imaginary eigenvalues as Hopf curves.

Solving \eqref{eq:ReIm_1e} for $W^{E}$ and substituting into \eqref{eq:ReIm_2e} we have
\begin{equation}
    W^E_{Hopf}(W^{IE})=\frac{p_2\omega^2-p_0}{q\omega\Big((\alpha\omega+\beta)C(\omega)-(\alpha-\beta\omega)S(\omega)\Big)} \label{eq:WEhopf}
\end{equation}
where $\omega$ is a root of
\begin{align}
&-\Big(C(\omega)\alpha + S(\omega)\beta\Big)\omega^4 + (p_2 - 1)\Big(C(\omega)\beta - S(\omega)\alpha\Big)\omega^3 +  \Big(\left(C(\omega)\alpha+S(\omega)\beta\right)( p_{1a} - p_2) + p_{1b}p_2\Big)\omega^2  \nonumber\\
&\hspace{6cm} - (p_0 - p_{1a})\Big(C(\omega)\beta - S(\omega)\alpha\Big)\omega + p_0\Big(C(\omega)\alpha + S(\omega)\beta - p_{1b}\Big)=0
  \label{eq:omega}\end{align}
Recalling the definitions of the $p_j$, \eqref{eq:pdef}, we that $W^E_{Hopf}$ depends on $W^{IE}$ directly through $p_0$ and indirectly through $\omega$.
It is difficult to analyze these equations in general, but we can consider some limiting behaviour. 

\begin{proposition}\label{thm:endpt}
If $\beta=0$ the endpoints $W^E_{Hopf}(0)$ and $\lim_{W^{IE}\rightarrow \infty} W^E_{Hopf}(W^{IE})$ are decreasing functions of $\alpha$.
\end{proposition}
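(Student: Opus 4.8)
The plan is to show that at both endpoints the quantities defining $W^E_{Hopf}$ collapse so that $W^E_{Hopf}$ is literally proportional to $1/\alpha$; the derivative with respect to $\alpha$ then has constant sign. The mechanism is that the coefficient $p_{1b}$ vanishes at both endpoints, after which the auxiliary equation \eqref{eq:omega} (with $\beta=0$) becomes homogeneous of degree one in $\alpha$. So first I would check $p_{1b}=0$ at the two endpoints. At $W^{IE}=0$ we have $K_2=W^{IE}\phi'(W^{IE}p)=0$, hence $p_{1b}=pK_2/I^{*}=0$ by \eqref{eq:p1b}. For the endpoint $W^{IE}\to\infty$, the argument $W^{IE}p$ tends to $+\infty$; since $\phi'(x)=a\phi(x)(1-\phi(x))$ decays exponentially in $x$ while $W^{IE}$ grows only linearly, $K_2=W^{IE}\phi'(W^{IE}p)\to 0$, hence $p_{1b}=pK_2/I^{*}\to 0$. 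Along the way one records that the remaining coefficients stay bounded and converge: $I^{*}=\phi(W^{IE}p)\to 1$, so $p_0=I^{*2}K_1/(\tau_1\tau_2)\to K_1/(\tau_1\tau_2)$, and from \eqref{eq:p1ab} (whose bracket is $1$ plus a fixed multiple of $p_{1b}$) $p_{1a}\to p_0+1/\tau_1$, while $p_2$ and $q$ do not depend on $W^{IE}$ at all.

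Next, set $\beta=0$ and $p_{1b}=0$ in \eqref{eq:omega}. The only terms there not carrying a factor of $\alpha$ are $p_{1b}p_2\omega^2$ and $-p_0p_{1b}$, which now vanish, so the equation reduces to $\alpha$ times an $\alpha$-independent expression in $\omega$; thus the relevant positive root $\omega=\omega_\star$ at the endpoint is independent of $\alpha$ (for the limit $W^{IE}\to\infty$ one invokes continuity of the root in the coefficients together with their limits just recorded). With $\beta=0$ the denominator of \eqref{eq:WEhopf} is $q\,\alpha\,\omega\big(\omega C(\omega)-S(\omega)\big)$, so at either endpoint
\[
W^E_{Hopf}=\frac{1}{\alpha}\cdot\frac{p_2\omega_\star^2-p_0}{q\,\omega_\star\big(\omega_\star C(\omega_\star)-S(\omega_\star)\big)}=:\frac{c}{\alpha},
\]
with $c$ independent of $\alpha$ since $p_0,p_2,q$ and $\omega_\star$ all are. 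Differentiating gives $\partial_\alpha W^E_{Hopf}=-c/\alpha^{2}$.

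The one point needing care is the sign of $c$. On the branch of interest the Hopf locus sits in the first quadrant, i.e. $W^E_{Hopf}>0$ for admissible eigenvalues (and at $\alpha=r_N=1$ this recovers the synchronous Hopf, whose endpoint is positive by \cite{al2021impact,nicola2021}), which forces $c>0$ and hence $\partial_\alpha W^E_{Hopf}=-c/\alpha^{2}<0$: the endpoint values are decreasing in $\alpha$. I expect this sign bookkeeping — tracking a single branch $\omega_\star$ of the (in general many-valued) frequency equation as $\alpha$ varies, and confirming positivity of the resulting constant — to be the only real obstacle; the homogeneity-in-$\alpha$ observation that drives the argument is immediate once $p_{1b}$ is seen to vanish at the endpoints.
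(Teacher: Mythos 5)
Your proposal is correct and follows essentially the same route as the paper's proof: at both endpoints $K_2$ (hence $p_{1b}$) vanishes, so with $\beta=0$ the frequency equation \eqref{eq:omega} loses its only $\alpha$-free terms and its root becomes independent of $\alpha$, leaving $W^E_{Hopf}$ equal to an $\alpha$-independent constant over $\alpha$. Your additional bookkeeping on the sign of that constant and on fixing a single root $\omega_\star$ addresses a point the paper's proof passes over silently, but it is a refinement of, not a departure from, the same argument.
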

\begin{proof}
When $W^{IE}=0$, from \eqref{eq:eqmdef}, \eqref{Kdef} and \eqref{eq:pdef}  we have 
\[ I^*=\phi(0)=\frac12,\ K_2=0, \ p_0=\frac{K_1}{4\tau_1\tau_2},\ p_{1a}=\frac{1}{\tau_1}+p_0,\ p_{1b}=0,\ p_{2}=1+\frac{1}{\tau_1}. \]
With these limits we see that
\begin{equation}
    W^E_{Hopf}(0)=\frac{p_2\omega^2-p_0}{\alpha q\omega\Big(\omega C(\omega)-S(\omega)\Big)} \label{eq:WEhopf0}
\end{equation}
where $\omega$ is a root of
\begin{equation}
-C(\omega)\omega^4 - (p_2 - 1)S(\omega)\omega^3 +  \Big(C(\omega)( p_{1a} - p_2)\Big)\omega^2  
+ (p_0 - p_{1a})S(\omega)\omega + p_0 C(\omega)=0.
  \label{eq:omega0}\end{equation}
Since \eqref{eq:omega0} does not depend on $\alpha$, it is clear from \eqref{eq:WEhopf0} that $W^E_{Hopf}(0)$ is a decreasing function of $\alpha$.
  
When $W^{IE}\rightarrow\infty$ we have
\[ I^*\rightarrow 1,\ K_2\rightarrow 0, \ p_0=\frac{K_1}{\tau_1\tau_2},\ p_{1a}\rightarrow \frac{1}{\tau_1}+p_0,\ p_{1b}\rightarrow 0,\ p_{2}=1+\frac{1}{\tau_1}. \]
Thus the expression for $W^E_{Hopf}(W^{IE})$ in this case is the same as for $W^E_{Hopf}(0)$, with just a different value for $p_0$. Hence this is also a decreasing function of $\alpha$
\end{proof}
If all the eigenvalues are real, then they satisfy $\alpha\le 1$ and this theorem implies that, when $W^{IE}=0$ or is arbitrarily large, the Hopf bifurcation point corresponding to $r_k=1$ has the lowest value of $W^E$. It follows from the discussion following Proposition~\ref{prop_evector}, that the equilibrium point will lose stability via a Hopf bifurcation giving rise to an oscillation where all the nodes are synchronized.  To consider what happens when $W^{IE}$ between these limits and for connection matrices with complex eigenvalues, we consider some specific distributions.

\subsubsection{No delay} \label{sec:curvesnodelay}
The case of zero delay corresponds to the delay distribution being given by a Dirac delta, $g(s)=\delta(s)$. In this case the $k^{th}$ factor of the characteristic equation \eqref{charac_k} is 
a cubic polynomial in $\lambda$:
\begin{equation}\label{charac_eq_ODE}
    \lambda^3+p_2\lambda^2+p_1\lambda+p_0-r_k W^Eq\lambda(\lambda+1)=0. 
\end{equation}
In this case $C(\omega)=1$ and $S(\omega)=0$ and the curves of pure imaginary eigenvalues are given by
\begin{equation}
W^E_{Hopf}(W^{IE})=\frac{p_2\omega^2-p_0}{q(\omega^2\alpha+\omega\beta)} \label{eq:WEhopfnodelay}
\end{equation}
where $\omega$ is a root of
\begin{equation}
\alpha\omega^4 - (p_2 - 1)\beta\omega^3 -  \Big(\alpha( p_{1a} - p_2) + p_{1b}p_2\Big)\omega^2+ (p_0 - p_{1a})\beta \omega + p_0\Big( p_{1b}-\alpha\Big)=0.
\label{eq:omeganodelay}\end{equation}

In Appendix~\ref{appendix_C} we show that the following are satisfied. 
\begin{equation} \frac{1}{\alpha K_1}\le W^E_{Hopf}(W^{IE}),\label{curveorder} \end{equation}
for any $\r_k=\alpha+i\beta$, and, if  $r_k=\alpha\in{\mathbb R}$,  then $W^E_{Hopf}$ is a decreasing function of $\alpha$
with 
\begin{equation} \frac{1}{\alpha K_1}\le W^E_{Hopf}(W^{IE})<\frac{1+\tau_1}{\alpha K_1}.\label{curveorderreal}\end{equation}
The equality only holds when $W^{IE}=0$ and $W^{IE}\rightarrow\infty$.

These results tell us about the geometry of the curves of pure imaginary eigenvalues. First the endpoints of each curve are determined by the real part of the corresponding eigenvalue of the connectivity matrix $r_k$. Thus, these endpoints are ordered by the size of $\Re(r_k)$. This is a slight generalization of Proposition~\ref{thm:endpt}. The main differences are that we can show this for any $r_k\in{\mathbb C}$ and we have an exact expression for the value of $W^E$ at the endpoints. Further, \eqref{curveorder} indicates that each curve is bounded below by the values at the endpoints. If all the $r_k$ are real, we have also found an upper bound for each curves given by \eqref{curveorderreal}. 

Other results in Appendix~\ref{appendix_C} for the case $r_k=\alpha\in{\mathbb R}$ are as follows.
\begin{itemize}
\item If $\alpha<0$ eq.~\eqref{charac_eq_ODE} only has no pure imaginary eigenvalues 
\item For a given $\alpha>0$, there is only one curve of pure imaginary eigenvalues given by \eqref{eq:WEhopfnodelay}-\eqref{eq:omeganodelay}
\item  $W^E_{Hopf}(W^{IE})$ as given by \eqref{eq:WEhopfnodelay} is a decreasing function of $\alpha$
\end{itemize}
These results have the strongest implication if all the $r_k$ are real. In this case, $\alpha\le 1$ and the Hopf bifurcation curve corresponding to $r_k=1$ lies below all the other Hopf curves. The discussion following Proposition~\ref{prop_evector} then shows that, for any value of $W^{IE}$, the equilibrium point will lose stability via a Hopf bifurcation giving rise to an oscillation where all the nodes are synchronized. The Hopf bifurcations corresponding to other $\alpha$ are not expected to affect the observable synchronization of the nodes as they will occur when the equilibrium point is already unstable.

\subsubsection{Gamma distribution function}
Recall that this distribution is given by \eqref{gammaeq1}.
The Laplace transform of this distribution is
\begin{equation}\label{gammaeq3}
\widehat{G}(\lambda)=\left(\frac{\gamma}{\lambda+\gamma}\right)^m.
\end{equation}
Thus the $k^{th}$ factor of the characteristic equation \eqref{charac_k} becomes
  \begin{eqnarray}\label{gammaeq4}
  \lambda^3+p_2\lambda^2+p_1\lambda+p_0-r_kqW^E\lambda(\lambda+1)\left(\frac{\gamma}{\lambda+\gamma}\right)=0
  \end{eqnarray}
  
Recall the {\em weak} gamma distribution corresponds to $m=1$ and has mean delay $\tau_m=\frac{1}{\gamma}$. Thus the $k^{th}$ factor of the characteristic equation for this case can be written
  \begin{eqnarray}\label{gammaeq4A}
  \lambda^3+p_2\lambda^2+p_1\lambda+p_0-r_kqW^E\lambda(\lambda+1)\left(\frac{1}{\lambda\tau_m+1}\right)=0,
  \end{eqnarray}
and the curves of pure imaginary eigenvalues are described by \eqref{eq:WEhopf}-\eqref{eq:omega} with 
\[
C(\omega)=\frac{1}{\omega^2\tau_m^2+1},\quad S(\omega)=\frac{\omega\tau_m}{\omega^2\tau_m^2+1}.
\]

The {\it strong gamma distribution}, corresponds to $m=2$, 
and has mean delay  $\tau_m=\frac{2}{\gamma}$. Thus equation \eqref{charac_k} can be written
 \begin{eqnarray}\label{gammaeq5}
  \lambda^3+p_2\lambda^2+p_1\lambda+p_0-r_kqW^E\lambda(\lambda+1)\left(\frac{1}{\frac12\lambda\tau_m+1}\right)^2=0,
  \end{eqnarray} 
and the curves of pure imaginary eigenvalues are described by  \eqref{eq:WEhopf}-\eqref{eq:omega} with 
\[
C(\omega)=\frac{1-\frac14\omega^2\tau_m^2}{(1+\frac14 \omega^2\tau_m^2)^2},\quad S(\omega)=\frac{\omega\tau_m}{(1+\frac14\omega^2\tau_m^2)^2}.
\]

In both cases above,  $C(\omega)$ and $S(\omega)$ are rational functions of $\omega$, hence explicit expressions can be found for  $W^E_{Hopf}(W^{IE})$. As the expressions are quite complex, we give them in Appendices~\ref{appendix_C1} and \ref{appendix_B2}.

\subsubsection{Dirac delta distribution}
This distribution is given by a Dirac delta shifted by $\gavg$, i.e., $g(s)=\delta(s-\gavg)$. Thus the $k^{th}$ factor of the characteristic equation, \eqref{charac_k}, becomes 
\[ \lambda^3+p_2\lambda^2+p_1\lambda+p_0- r_k qW^E\lambda(\lambda+1)e^{-\lambda\gavg}=0, \]
and the curves of pure imaginary eigenvalues are described by \eqref{eq:WEhopf}-\eqref{eq:omega} with $C(\omega)=\cos(\omega\gavg)$ and
$S(\omega)=\sin(\omega\gavg)$. 
Hence \eqref{eq:omega} is no longer a polynomial equation that can be explicitly solved for $\omega$. In \cite{al2021impact}, for the case $r_k=1$ and $\tau_m$ small, they derive  perturbation expansion
$W^{E}_{Hopf}(W^{IE})=W^{E}_{Hopf,0}(W^{IE})+W^{E}_{Hopf,1}(W^{IE})\tau_m+O(\tau_m^2)$.  While this approach can be easily extended to the situation when $r_k$ is any real number, when $r_k$ is complex it is more complicated. Thus we will instead use a numerical approach to study the Hopf curves in this case.

\subsubsection{Uniform distribution function}
Recall that this distribution is given by \eqref{unieq1}. The Laplace transform of this distribution is
  \begin{equation}\label{unieq3}
  \hat{G}(\lambda)=\frac{1}{2\sigma\lambda}e^{-\lambda\gavg}(e^{\lambda\sigma}-e^{-\lambda\sigma})=e^{-\lambda\gavg}\sinhc (\lambda\sigma),
  \end{equation}
  where
  \[ \sinhc(u)=\left\{ \begin{array}{cc}
      \frac{\sinh (u)}{u},&\mbox{if } u\ne 0 \\
       1,& \mbox{if } u=0
  \end{array}\right.\]
  Thus, the $k^{th}$ factor of the characteristic equation, \eqref{charac_k}, becomes 
  \begin{eqnarray}\label{unieq4}
  \lambda^3+p_2\lambda^2+p_1\lambda+p_0-r_kqW^E\lambda(\lambda+1)e^{-\lambda\gavg}\sinhc(\lambda \sigma)=0,
  \end{eqnarray}
and the curves of pure imaginary eigenvalues are described by \eqref{eq:WEhopf}-\eqref{eq:omega} with 
\[
C(\omega)=\sinc(\omega\sigma)\cos(\omega\gavg) \quad \text{and}\quad S(\omega)={\sinc}(\omega\sigma)\sin(\omega\gavg).
\]
\[ 
\sinc(u)=\left\{ \begin{array}{cc}
      \frac{\sin (u)}{u},&\mbox{if } u\ne 0 \\
       1,& \mbox{if } u=0
  \end{array}\right.\]
As for the Dirac delta case, here \eqref{eq:omega} is a transcendental equation in $\omega$ and cannot be solved explicitly. Thus we will treat this case numerically.

\section{Numerical Results}\label{sec:num}
\subsection{Numerical Bifurcation Curves}\label{sec:numbif}

From the work in \cite{al2021impact,nicola2021},  we may expect that the eigenvalue(s) that causes desynchronization should be the
one(s) with largest real part, that are not the eigenvalue $1$. Thus in our numerical work focus on roots of \eqref{charac_k} with these values of $r_k$.
For the bi-directional ring this is $r_1=\cos(\frac{2\pi}{N})$, leading to the equation
\begin{equation}\label{charac_eq_bi}
 0=C_1(\lambda)
=P(\lambda)-\cos\left(\frac{2\pi}{N}\right)Q(\lambda).
\end{equation}
Since the roots of this equation come in complex conjugate pairs, we need only consider $\lambda=i\omega$ with $\omega>0.$
For the uni-directional ring the relevant eigenvalues are   $\cos(\frac{2\pi}{N})+\pm i\sin(\frac{2\pi}{N})$, corresponding to $r_1$ and $r_{N-1}$. So we need to investigate the pure imaginary roots of the factors 
\begin{equation}\label{charac_eq_uni}
  0=C_1(\lambda)
=P(\lambda)-\left(\cos\left(\frac{2\pi}{N}\right)+ i\sin\left(\frac{2\pi}{N}\right)\right)Q(\lambda)  
\end{equation}
and
\begin{equation}\label{charac_eq_uni_a}
  0=C_{N-1}(\lambda)
=P(\lambda)-\left(\cos\left(\frac{2\pi}{N}\right)- i\sin\left(\frac{2\pi}{N}\right)\right)Q(\lambda)  
\end{equation}
As noted above, if $\lambda=i\omega$ is a root of \eqref{charac_eq_uni} then $\bar{\lambda}=-i\omega$ is a root of \eqref{charac_eq_uni_a} and vice-versa. Thus we may focus on roots of \eqref{charac_eq_uni} with $\omega\in\mathbb{R}$. Note that this corresponds to $\alpha=\cos(\frac{2\pi}{N})$ and $\beta=0$ or $\beta=\sin(\frac{2\pi}{N})$ in the expressions derived for the curves $W^E_{Hopf}(W^{IE})$ derived in section~\ref{ssec:Hcurves}.

As discussed in the previous section, these curves may be associated with a Hopf bifurcation of the equilibrium point. For the case of zero delay, we have use the numerical bifurcation package Matcont\cite{dhooge2003matcont} to verify this. For the parameters we explored, the curve with $r_k=1$ corresponds to a supercritical Hopf bifurcation.  For the bidirectional ring, \eqref{charac_eq_bi} also has one curve of supercritical Hopf bifurcations. For the uni-directional ring however,
eq.~\eqref{charac_eq_uni} has two associated curves of supercritical
Hopf bifurcations. See
Figure \ref{Fig_Hopf_coefficient} in Appendix \ref{appendix_HopfHopf}. From Proposition~\ref{prop_onecurve} and the following discussion, we will expect the Hopf bifurcation associated with the $r_k=1$ curve to result in oscillations where all the nodes are synchronized, while that associated with $r_k=\cos(\frac{2\pi}{N})$ or $r_k=\cos(\frac{2\pi}{N})+\pm i\sin(\frac{2\pi}{N})$ will result in travelling wave/splay state oscillations. Thus we will refer to these as the {\em synchronous Hopf curve} and {\em asynchronous Hopf curves}, respectively.

We fix the values of the model parameters as used in \cite{nicola2018chaos,nicola2021,al2021impact} 
\begin{equation} p=0.2,\ a=5,\ \tau_1=1,\ \tau_2=5,\label{params} \end{equation}
and used a numerical approach to explore the effect of the size of the network and the size of delay on the
 the curves of pure imaginary eigenvalues in the $(W^{IE},W^E)$ parameter plane. Figure \ref{fig_hopf_Curves_epsilon_0_1} shows the curves generated using \textit{Wolfram Mathematica}. We use the command \textsf{Solve} to find the maximum real part $\hat{\lambda}$ of the roots of the characteristic equations  \eqref{charac_eq_uni} and \eqref{charac_eq_bi} in a circle of radius eight  units centered at the origin. 
To approximate the curves, we calculate $\hat{\lambda}$  on a grid over the $(W^{IE},W^E)$-plane. Then, we approximate the boundary of the regions where $\hat{\lambda}$ changes its  sign from negative to positive using the commands \textsf{ListInterpolation} and  \textsf{RegionPlot}. To simplify these plots we only include the lower of the two asynchronous Hopf curves as the upper one cannot affect the observable behaviour.
To verify the accuracy of our numerical approach we used \textit{Maple} to solve the exact equations \eqref{eq:WEhopf}-\eqref{eq:omega} for the curves derived in section~\ref{ssec:Hcurves} in the case of no delay and the weak gamma distribution. Comparison plots are shown in 
Appendix \ref{appendix_NumericalAccuracy}.

For the bi-directional ring, we observe the following behaviour, for the values of $N$ and the mean delays we explored.
The asynchronous Hopf curves (solid red, green and blue curves in Figure~\ref{fig_hopf_Curves_epsilon_0_1}) have a similar shape to, and always lie completely above the synchronous Hopf curve (black curves). The asynchronous Hopf curves are ordered according to the number of neurons in the network, as $N$ increases the asynchronous curves get closer to the synchronous curve. For the case of no delay, these results are as predicted in section~\ref{sec:curvesnodelay} since the eigenvalues are real and $\alpha$ decreases as $N$ increases. Further, we observe that as the mean delay is increased, the synchronous and asynchronous curves move upward, but maintain their ordering.  

For the uni-directional ring, we observe the following behaviour, for the values of $N$ and the mean delays we explored. The asynchronous Hopf curves (dashed red, green and blue curves in Figure~\ref{fig_hopf_Curves_epsilon_0_1} have a different shape than and may intersect the synchronous Hopf curve (black curves). The asynchronous Hopf curves are ordered according to the number of neurons in the network, as $N$ increases the asynchronous Hopf curves move downwards in the $(W^{IE},W^E)$ parameter space.  As the mean delay is increased, the asynchronous curves move downwards and maintain their ordering.
\begin{figure}[H]
     \centering
         \includegraphics[width=1\textwidth]{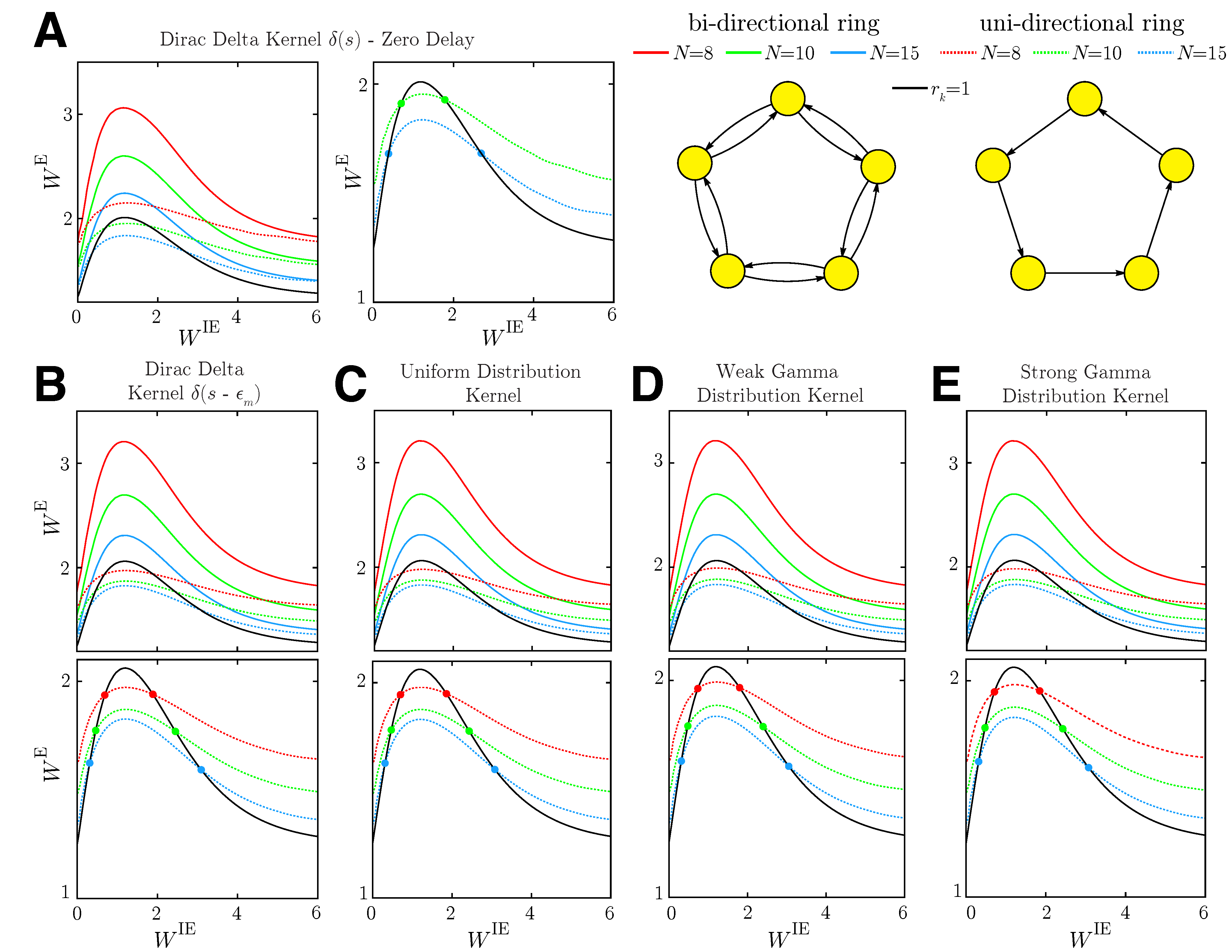}
         \caption{\textbf{Hopf curves in the $(W^{IE},W^E)$} parameter space.
         The solid (dashed) line represents bi-directional (uni-directional) ring  with $N=8$ red, $N=10$ green, and $N=15$ blue.
         \textbf{(A)} Dirac Delta kernel $g(s)=\delta(s)$ (no delay).
         Distributed delays with $\tau_m=0.1$.
         \textbf{(B)} Dirac Delta kernel $g(s)=\delta(s-\gavg)$ (fixed delay).
        \textbf{(C)} Uniform distribution kernel with $\sigma=0.1$.
        \textbf{(D)} Weak gamma distribution kernel with $\gamma=10$.
        \textbf{(E)} Strong gamma distribution kernel with $\gamma=20$. 
         }
         \label{fig_hopf_Curves_epsilon_0_1}
\end{figure}
\begin{table}[H]
\begin{tabular}{|c|c|c|c|}
\hline
Delay & $N=8$ & $N=10$ & $N=15$\\
\hline
none & none &  $(0.691,1.911)$ and $(1.780,1.928)$ & $(0.373,1.681)$ and $(2.688,1.683)$ \\
uniform & $(0.702, 1.944)$ and $(1.861, 1.949)$ & $(0.472, 1.780)$ and $(2.433, 1.775)$ & $(0.314, 1.626)$ and $(3.079, 1.597)$\\
weak & $(0.736, 1.963)$ and $(1.800, 1.967)$ & $(0.482, 1.790)$ and $(2.398, 1.787)$  & $(0.316, 1.628)$ and $(3.051, 1.604)$\\
strong & $(0.710, 1.948)$ and $(1.846, 1.953)$ &  $(0.474, 1.783)$ and $(2.424, 1.778)$ & $(0.314, 1.626)$ and $(3.072, 1.599)$\\
\hline
\end{tabular}
\caption{Location of intersection points of the synchronous and asynchronous Hopf bifurcation curves shown in Figure~\ref{fig_hopf_Curves_epsilon_0_1}.}
\label{tab:intpts}
\end{table}

The location of the asynchronous Hopf curve in the bi-directional ring means that on these curves, the synchronous equilibrium point is unstable hence crossing these curves is not expected to change the observable behaviour of the system. As $W^E$ is increased from small values, we expect the synchronous limit cycle to appear as the synchronous Hopf curve is crossed and no further changes in behaviour will be associated with bifurcations of the equilibrium point.

The location of the asynchronous Hopf curves in the uni-directional ring case,  means that for sufficiently large $N$ with fixed mean delay, or sufficiently large mean delay with fixed $N$ the lower asynchronous Hopf curve intersects the synchronous Hopf curve in two places, $(W^{IE}_L,W^{E}_L)$ and $(W^{IE}_R,W^E_R)$. Here we focused on relatively small mean delays and thus the locations of the intersection points for the different distributions were very similar. See Table~\ref{tab:intpts}. These intersection points are points of {\em double Hopf} bifurcation with the following
implications. If $W^E$ is increased with $W^{IE}_L<W^{IE}<W^{IE}_R$ an asynchronous Hopf curve is crossed {\em before} the synchronous Hopf curve. This will lead to the creation of an asynchronous limit cycle near the Hopf curve, which will be asymptotically stable if the bifurcation is supercritical. Further, there should be curves of Neimark-Sacker (torus) bifurcation emanating from the double Hopf points \cite{guckenheimer2013nonlinear,kuznetsov1998elements}. The torus will combine properties of the synchronous and asynchronous Hopf. For the case of no delay with $N=10$, we studied this with Matcont \cite{dhooge2003matcont}. Matcont indicated the coefficients of the normal form near the two Hopf-Hopf points correspond to the case where a stable torus exists between two curves of Neimark-Sacker bifurcations. We were able to continue one of the curves, but not the other.  See Appendix \ref{appendix_HopfHopf} for more details. 

The effect of the delay on the Hopf curves means that increasing the delay makes intersections points of the asynchronous and synchronous Hopf curves more likely in the uni-directional ring, for example, intersections can occur for smaller sizes of rings. For the bi-directional ring this is not the case. The delay does not seem to make intersection points more or less likely.

\subsection{Numerical Simulations}
To verify the predictions of the numerical bifurcation analysis, we carried out numerical integration of the full system of delay differential equations~\eqref{eq1}, using the command \textsf{NDSolve} in \textit{Wolfram Mathematica}, for $3500$ (uniformly distributed) random points  
in the instability region in the $(W^{IE},W^E)$-plane. For each parameter set, 
 after the numerical solution stabilized, we determined
 the  
 time $\hat{t}$ where the maximum of $E_1(t)$ occurs and  calculated the value
\[a=\max\{|E_1(\hat{t})-E_k(\hat{t})|:2\le k\le n\}.\]
We  
then classified the solution as synchronized when $a<0.001$ and desynchronized otherwise. 

In the bidirectional ring case we only observe synchronized solutions throughout the parameter space, with no delay and with mean delay $\tau_m=0.1$.  See Figure~\ref{fig_hopf_Curves_bi_dir_ODE}A-C.  We only show results for the uniform distribution, as the results for the other distributions are the same -- only synchronous solutions occur. As can be seen in Figure~\ref{fig_hopf_Curves_bi_dir_ODE}D, for this value of the mean delay the Hopf bifurcation curves for all distributions are very similar, thus it is not surprising that the results are similar.
The fact that only synchronous behaviour occurs is consistent with our analysis. Since the asynchronous Hopf curves lie above the synchronous Hopf curve, the equilibrium point loses stability via a synchronous Hopf bifurcation and the asynchronous bifurcation occurs when the equilibrium point is already unstable.
\begin{figure}
    \centering
   \includegraphics[width=1\textwidth]{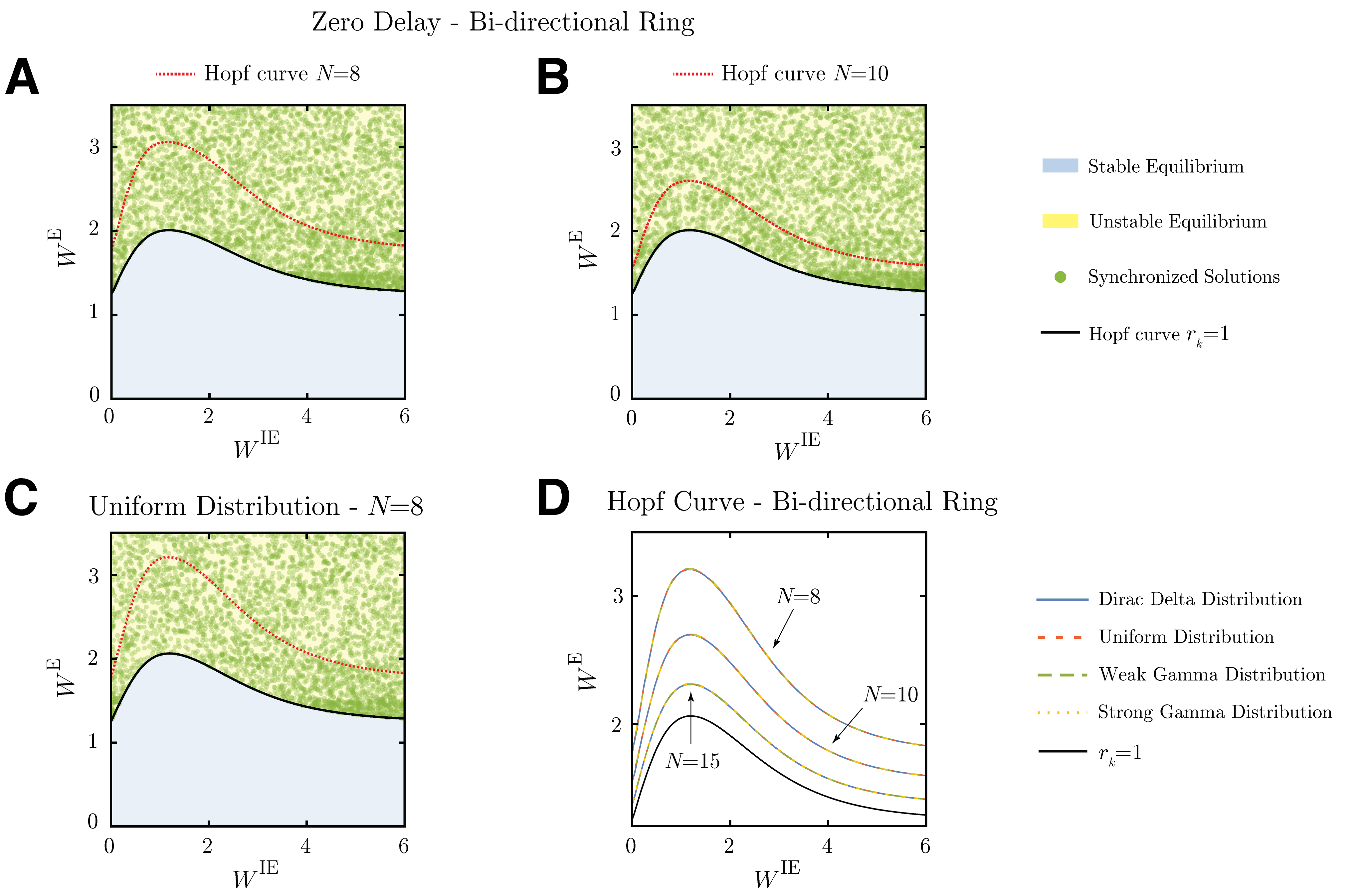}
    \caption{\textbf{Simulation results for the bi-directional ring}.
         $W^{IE}-W^E$ parameter plane, showing the stability region, bifurcation curves and presence of synchronized solutions of the system \eqref{eq1} for \textbf{(A)}, \textbf{(B)} $N=8,10$ neurons with no delay; \textbf{(C)} 
          $N-8$ neurons with the uniform distribution and $\tau_m=0.1$. The black (red) line represents Hopf curve when $r_k=1$ ($r_k=\cos(\nicefrac{2\pi}{N})$). \textbf{(D)} Hopf bifurcation curves with $\tau_m=0.1$ for all distributions considered and numbers of neurons as shown.}
       \label{fig_hopf_Curves_bi_dir_ODE}   
\end{figure}

The uni-directional ring case is more interesting.
The results for the case of zero delay are shown in Figure~\ref{fig_hopf_Curves_uni_dir_ODE} and those for mean delay $\tau_m=0.1$ are shown in Figure~\ref{fig_hopf_Curves_uni_dir}.
In Figure~\ref{fig_hopf_Curves_uni_dir_ODE}A ($N=8$ neurons) we find only synchronized solutions close to the Hopf curves. This agrees with the prediction of our analysis since the equilibrium loses stability via a synchronous Hopf bifurcation.  By contrast, in  Figure~\ref{fig_hopf_Curves_uni_dir_ODE}B ($N=10$ neurons)  we see a region of desynchronized solutions close to the Hopf curves, in the region where the equilibrium point loses stability via the asynchronous Hopf (between the intersection points of the Hopf curves). As can be seen in 
Figure~\ref{fig_hopf_Curves_uni_dir_ODE}C, the asynchronous solutions include both travelling wave type periodic orbits (Point 3) and torus-like solutions (Points 2 and 4) which appear to combine asynchronous and synchronous properties. These simulations are consistent with our study of the Hopf-Hopf interaction points in Appendix~\ref{appendix_HopfHopf}. Interestingly, in both the uni-directional cases, we found a region farther from the Hopf curves where asynchronous solutions were observed.   Numerical bifurcation analysis indicates that these solutions are associated with a torus bifurcation not connected to the Hopf-Hopf points (blue curve in Figure~\ref{fig_hopf_Curves_uni_dir_ODE}B). No such region was found in the bidirectional case, as shown in Figure~\ref{fig_hopf_Curves_bi_dir_ODE}.

To study the effect of the delay, we consider the case of a uni-directional ring with $N=8$ neurons. As shown in Figure~\ref{fig_hopf_Curves_uni_dir}A, the asynchronous Hopf curves with $N=8$ and $\tau_m=0.1$ intersect the synchronous Hopf curves, in contrast with the case of no delay, Figure~\ref{fig_hopf_Curves_uni_dir_ODE}A. As the Hopf curves for all the distributions are very similar, we only show simulation results for the uniform distribution case. The results for the other distributions were similar.  In Figure~\ref{fig_hopf_Curves_uni_dir}B,  we see a region of  desynchronized solutions close to the Hopf curves, in the region where the equilibrium point loses stability via the asynchronous Hopf and another region of desynchronization farther from the Hopf curves. Some example solutions are shown in Figure~\ref{fig_hopf_Curves_uni_dir}C. In general, the results for $N=8$ and $\tau_m=0.1$ are very similar to those with $N=10$ and no delay.

\begin{figure}[H]
     \centering
   \includegraphics[width=1\textwidth]{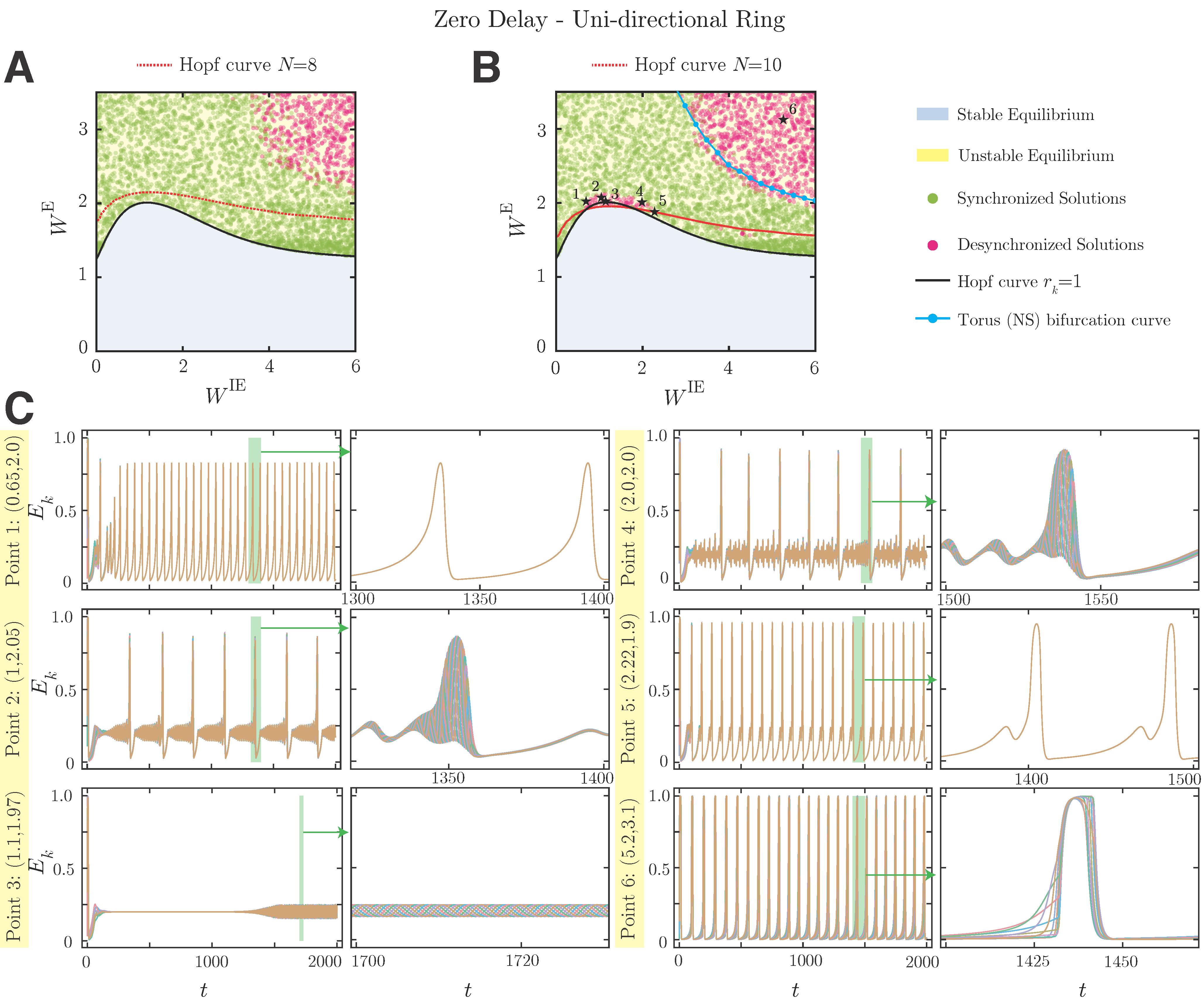}
         \caption{\textbf{Simulation results for the uni-directional ring with no delay}.
         \textbf{(A)}-\textbf{(B)} $W^{IE}-W^E$ parameter plane, showing the stability region, Hopf bifurcation curves and presence of synchronized and desynchronized solutions for $N=8,10$. The black (red) line represents Hopf curve when $r_k=1$ ($r_k=\cos(\nicefrac{2\pi}{N})-i\sin(\nicefrac{2\pi}{N})$). 
          The blue line represents the torus (NS) bifurcation curve. 
         \textbf{(C)} Time series for the solutions corresponding to the parameter sets marked with black stars in B. 
         }
         \label{fig_hopf_Curves_uni_dir_ODE}
\end{figure}

\begin{figure}[H]
     \centering
   \includegraphics[width=1\textwidth]{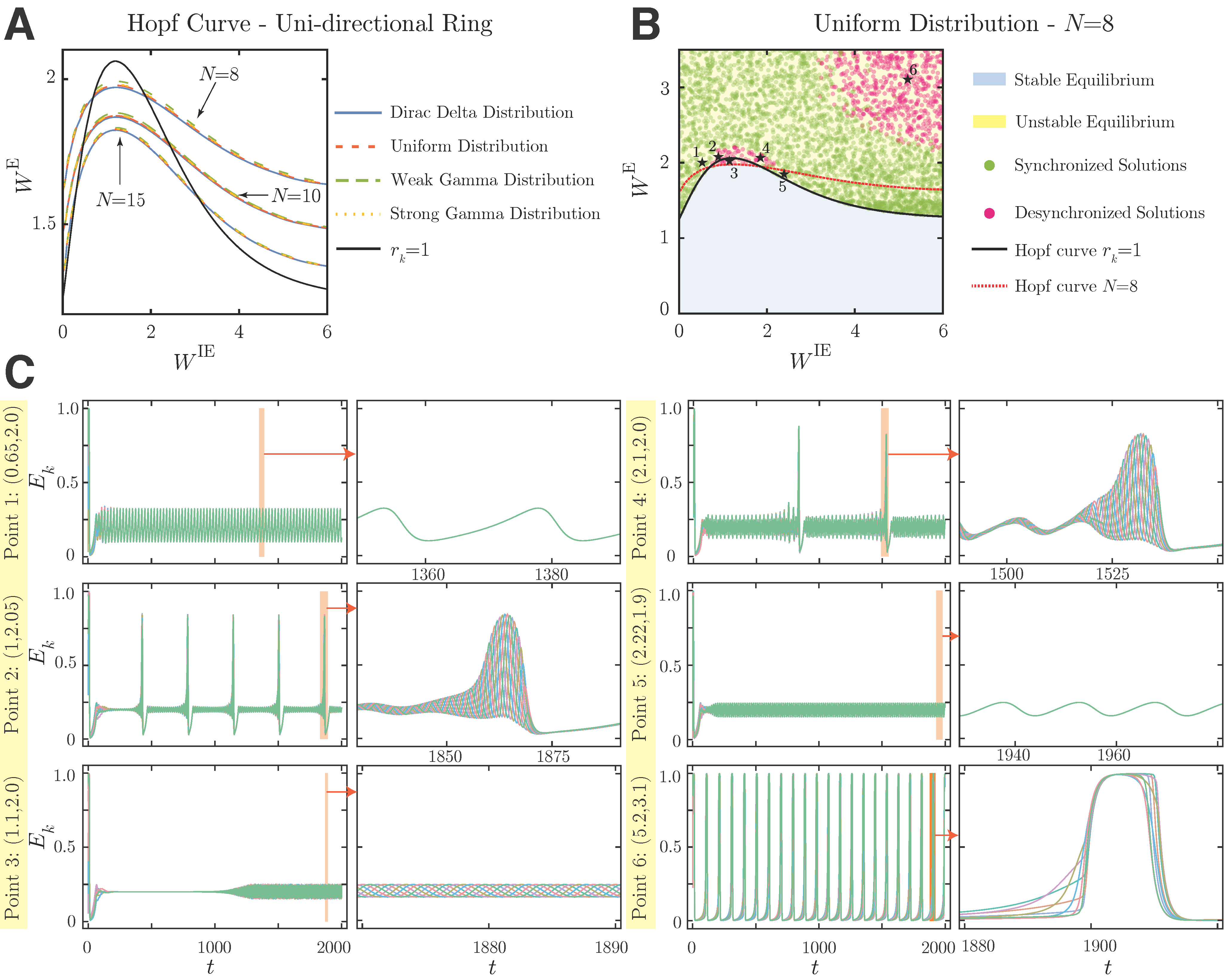}
         \caption{\textbf{Simulations results for the uni-directional ring when $\gavg=0.1$}.
         \textbf{(A)} Hopf curves with different distributions for $N=8,10,15$.
         \textbf{(B)} $W^{IE}-W^E$ parameter plane, showing the stability region, Hopf bifurcation curves and presence of synchronized and desynchronized solutions for $N=8$.
         \textbf{(C)} Time series for the solutions corresponding to the parameter sets marked with black stars in B. 
         }
         \label{fig_hopf_Curves_uni_dir}
\end{figure}

\section{Discussion}

    We considered network of Wilson-Cowan nodes with homeostatic adjustment of the within-node inhibitory to excitatory connection and time delayed connections between excitatory populations of different nodes. We focussed on connectivity matrix with a constant row sum, which ensures that the network always admits a synchronous solution, that is a solution where corresponding variables in each node exhibit the same solution. We used linearization to study the stability and bifurcations of the synchronous equilibrium point. We showed that if the connectivity matrix is diagonalizable,
    the characteristic equations can be written as a product of factors, with each factor corresponding to a different eigenvalue of the connectivity matrix. One factor corresponds to stability within synchronous subspace bifurcations of synchronous solution. Other factors correspond to stability normal to synchronous subspace and bifurcations give rise to asynchronous solutions. We reiterate that this is true for any network where the connectivity matrix is diagonalizable, not just the ring networks that we focussed on for our analysis. For our system, the only bifurcations of the equilibrium point are Hopf bifurcations.
   
    We then contrasted case of a symmetric connectivity matrix, where all eigenvalues are real, with nonsymmetric. In the symmetric case, Hopf bifurcation curves giving rise to asynchronous solutions are situated so that bifurcations always occur after bifurcation of synchronous Hopf, and thus give rise to asymptotically stable solutions. In the asymmetric case, bifurcations curves corresponding to complex eigenvalues may intersect curves of synchronous Hopf giving rise to asynchronous periodic solutions and torus solutions involving asynchronous and synchronous behaviour.
    We showed that this interaction is facilitated by increasing the number of nodes or increasing the time delay. This gives an explanation and possible mechanism for the results found using the Master Stability function in our prior work \cite{nicola2021,al2021impact}.
    We considered a variety of delay distributions: discrete, uniform and gamma.  For small mean delay, all the distributions gave similar results. A full exploration of the differences when the mean delay is not small is left for future work.
    
    We also observed desynchronized solutions not associated with the Hopf-Hopf interaction point. Numerical bifurcation analysis indicates these are torus solutions associated with stand-alone Neimark-Sacker bifurcations of the synchronous periodic solution. In principle these can be studied by linearizing around the synchronous periodic solution. The associated periodic system will have exactly the same structure as the linearization around the equilibrium point. Thus one may expect Neimark-Sacker bifurcations of the synchronous periodic solution given rise to torus solutions where the second frequency is associated with an asynchronous pattern of oscillation. Indeed all our numerical simulations of
    torus solutions showed behaviour associated with the ``splay state", which corresponds to the eigenvalue with largest real part in the connectivity matrix. The study of desynchronization via the linearization about the periodic solution has been explored for networks where the periodic solution of the notes is available in closed form \cite{choe2010controlling}.
    
    Although our analysis focussed on a particular system associated with a brain network model, our approach could be applied any network of oscillators which admits a synchronous equilibrium, such as those studied using the Master Stability function approach \cite{pecora1990synchronization}. Thus the Hopf-Hopf interaction mechanism for desynchronization we identified could occur in these systems as well. The Master Stability function approach essentially linearizes a network about a general synchronous solution and then uses numerical methods to determine how the stability of solution depends on the eigenvalues of the connection matrix. If the synchronous solution is an equilibrium point, then the linearized system can be studied via the characteristic equation as we have done.

\section*{Acknowledgements}
SAC acknowledges the support of the Natural Sciences and Engineering Research Council of Canada. 
IAD acknowledges the support of the Jordan University of Science and Technology through the Faculty Member Research Grant [grant number 20230324]. 
BM acknowledges the support of the Niels Henrik Abel Board.
\bibliography{bib}
\bibliographystyle{siam}

\appendix
\numberwithin{equation}{section}
\numberwithin{figure}{section}
\numberwithin{table}{section}
\makeatletter
\def\@seccntformat#1{\@ifundefined{#1@cntformat}%
   {\csname the#1\endcsname\quad} 
   {\csname #1@cntformat\endcsname} 
}
\newcommand{\section@cntformat}{Appendix \thesection:\ } 
\makeatother

\section{Stability results for system with zero delay} \label{appendix_stabnodelay}
The case of zero delay corresponds to the delay distribution being given by the Dirac delta function $g(s)=\delta(s)$. In this case the $k^{th}$ factor of the characteristic equation \eqref{charac_k} is 
a cubic polynomial in $\lambda$ given by \eqref{charac_eq_ODE}, which we restate here:
\[    \lambda^3+p_2\lambda^2+p_1\lambda+p_0-r_k W^Eq\lambda(\lambda+1)=0. \]

We have the following result. 
\begin{lemma}\label{lem:zerort}
Let $r_k=\alpha+i\beta$. If  
\begin{equation}\label{condition_NoDelay2}
   \alpha W^EK_1 <1.
\end{equation}
then \eqref{charac_eq_ODE} has no roots with zero real part.
\end{lemma}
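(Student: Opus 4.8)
The plan is to show that $\lambda = i\omega$ with $\omega \in \mathbb{R}$ cannot solve \eqref{charac_eq_ODE} when $\alpha W^E K_1 < 1$, treating separately the cases $\omega = 0$ and $\omega \neq 0$. For $\omega = 0$: substituting $\lambda = 0$ into \eqref{charac_eq_ODE} gives $p_0 = 0$, but $p_0 = {I^*}^2 K_1/(\tau_1\tau_2) > 0$ since $K_1, I^*, \tau_1, \tau_2$ are all positive; so $\lambda = 0$ is never a root, for any $r_k$, and the hypothesis is not even needed here.

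For $\omega \neq 0$, I would take real and imaginary parts of $C_k(i\omega) = 0$. With $C(\omega) = 1$, $S(\omega) = 0$ in \eqref{eq:ReIm_1e}–\eqref{eq:ReIm_2e} (the no-delay specialization), these become
\begin{align*}
p_0 - p_2\omega^2 &= -W^E q\,\omega(\alpha\omega + \beta),\\
p_1\omega - \omega^3 &= W^E q\,\omega(\alpha - \beta\omega).
\end{align*}
Dividing the second by $\omega \neq 0$ and writing $p_1 = p_{1a} + p_{1b}W^E q$, one can solve the second relation for $W^E q\,\alpha$ and substitute into the first, or — more cleanly — eliminate $\beta$ between the two equations. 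The goal is to derive a single polynomial condition in $\omega^2$ (equation \eqref{eq:omeganodelay} with the appropriate sign bookkeeping) and then argue it has no real solution under \eqref{condition_NoDelay2}. Alternatively, and probably more transparently, I would look at the value of $C_k(\lambda)$ or a related real quantity at a candidate imaginary root: the cleanest route is to observe that if $i\omega$ is a root then so is the conjugate situation for $C_{N-k}$, and to multiply $C_k(i\omega)$ by its conjugate partner to get a real polynomial $|P(i\omega)|^2 = |r_k|^2 |Q(i\omega)|^2$ type identity; expanding this gives a polynomial in $\omega^2$ whose constant term is $p_0^2 - |r_k|^2 (W^E)^2 q^2 \cdot 0$ — wait, $Q(i\omega) = r_k W^E q\, i\omega(i\omega+1)$ vanishes at $\omega = 0$, so the constant term is just $p_0^2 > 0$, and I would need the structure at the other end.

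The key algebraic step is recognizing that at $\lambda = i\omega$, the real part of the characteristic equation forces a relation that, combined with $\alpha W^E K_1 < 1$ (equivalently $\alpha W^E q < 1/\tau_1$, using $q = K_1/\tau_1$), yields a contradiction. Concretely, I expect the argument to hinge on the coefficient of $\omega^2$ in \eqref{eq:omeganodelay}: the term $-\alpha p_{1a} + \alpha p_2 - p_{1b}p_2$ together with the positivity facts $p_{1a} > p_0 > 0$, $p_2 = 1 + 1/\tau_1$, $0 \le p_{1b} < 0.2785$, and $\alpha \le 1$. One shows that under \eqref{condition_NoDelay2} every coefficient of the polynomial \eqref{eq:omeganodelay} (viewed suitably, perhaps after the substitution $W^E q\,\alpha < 1/\tau_1$) has a definite sign incompatible with the existence of a positive real root $\omega^2$, e.g.\ via Descartes' rule of signs or a direct estimate.

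\textbf{Main obstacle.} The delicate part is handling the $\beta \neq 0$ (complex eigenvalue) case, since then the two equations \eqref{eq:ReIm_1e}–\eqref{eq:ReIm_2e} are genuinely coupled through $\beta$ and one cannot immediately reduce to a sign-definite polynomial in $\omega^2$; one must either eliminate $\beta$ carefully (producing \eqref{eq:omeganodelay}) and then control the sign of every coefficient using $\alpha \le 1$, $|r_k| \le 1$, and the explicit bounds on $p_{1a}, p_{1b}$, or find a more clever real combination. I expect the bound \eqref{condition_NoDelay2} enters precisely to guarantee the leading behavior (the $\alpha\omega^4$ term versus the lower-order terms) has the right sign so that $\eqref{eq:omeganodelay}$ cannot vanish for real $\omega$. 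Getting the inequalities to close in the complex case — rather than merely the real case — is where the real work lies, and it likely requires the observation that $p_{1a} > p_0$ strictly, not just $p_{1a} \ge p_0$.
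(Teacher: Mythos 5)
Your setup (ruling out $\lambda=0$ via $p_0>0$, then taking real and imaginary parts at $\lambda=i\omega$) matches the paper, but the proposal stops at a plan and, more importantly, the plan you sketch for the decisive complex case would not close. You propose to eliminate $\beta$ so as to arrive at \eqref{eq:omeganodelay} and then argue that it has no real root under \eqref{condition_NoDelay2}. That cannot work as stated: \eqref{eq:omeganodelay} is obtained after $W^E$ has been solved out, so it does not contain $W^E$ at all, and a hypothesis that only bounds $W^E$ (namely $\alpha W^EK_1<1$) cannot prevent it from having real roots --- indeed it generically does have them; those roots are exactly the Hopf frequencies that generate the curves $W^E_{Hopf}(W^{IE})$ elsewhere in the paper. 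The hypothesis does not kill the frequencies, it keeps $W^E$ below the values at which crossings occur, so a sign/Descartes analysis of \eqref{eq:omeganodelay} is the wrong vehicle. Your alternative idea of equating moduli, $|P(i\omega)|^2=|r_k|^2|Q(i\omega)|^2$, also cannot succeed in general, because it only sees $|r_k|$ while the hypothesis constrains $\alpha=\Re(r_k)$; with $\alpha$ small and $\beta$ large one can have $\alpha W^EK_1<1$ while $|r_k|W^EK_1>1$, and the modulus identity alone does not exclude solutions there. Finally, the "easy" real case $\beta=0$ is also not actually carried out: there one needs the Routh--Hurwitz-type observation that a pure imaginary root forces $(p_2-\alpha W^Eq)(p_1-\alpha W^Eq)-p_0=0$, and the paper shows this quantity is strictly positive when $\alpha W^EK_1<1$.

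The missing idea is much simpler than the elimination you attempt. After dividing the imaginary-part equation by $\omega\neq 0$, the two conditions \eqref{eq:H01}--\eqref{eq:H02} contain the unknown $\beta$ only through the single combination $\beta W^Eq\,\omega$, which appears identically in both. Solving \eqref{eq:H02} for $\beta W^Eq\,\omega$ and substituting into \eqref{eq:H01} (equivalently, subtracting the equations) removes $\beta$ in one step and leaves a relation linear in $\omega^2$,
\begin{equation*}
\frac{1}{\tau_1}\bigl(1-\alpha W^EK_1\bigr)\,\omega^2+\frac{1}{\tau_1}\bigl(1-\alpha W^EK_1+W^{EI*}K_1K_2\bigr)=0,
\end{equation*}
using $q=K_1/\tau_1$ and the expressions \eqref{eq:pdef}. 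Under \eqref{condition_NoDelay2} both terms are nonnegative and the second is strictly positive, so no real $\omega$ exists; no bounds on $p_{1a},p_{1b}$, no control of every coefficient of a quartic, and no strictness of $p_{1a}>p_0$ are needed. This is precisely where your "main obstacle" dissolves, and it is the step your proposal is missing.
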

\begin{proof}
Recall from \eqref{eq:pdef} that $p_0>0$. Thus zero roots of \eqref{charac_eq_ODE} are not possible. Pure imaginary roots $\lambda=i\omega$ exist only if the following equations are satisfied
\begin{eqnarray}
(p_2-\alpha W^E q)\omega^2-\beta W^E q\omega -p_0 &=& 0\label{eq:H01}\\
\omega^2-\beta W^{E}q\omega+\alpha W^{E}q- p_1&=& 0.\label{eq:H02}
\end{eqnarray}

Since $p_0>0$, if $\beta=0$, then these equations have a solution for $\omega^2$ only if 
$p_2-\alpha W^E q>0$, $p_1-\alpha W^{E}q>0$ 
and $(p_2-\alpha W^E q)(p_1-\alpha W^E q)-p_0=0$.
Using the expressions for $p_2$ and $p_1$ from \eqref{eq:pdef} we have
\begin{align}
\label{eq_coef2}
&\left(p_2-\alpha W^E q\right)
\left(p_1-\alpha W^E q\right)-p_0\nonumber\\
&\hspace{1cm}
=\cfrac{1}{\tau_1^2}\bigg(1+\tau_1-\alpha W^EK_1\bigg)\bigg(1-\alpha W^EK_1 \bigg)
+\cfrac{{W}^{EI*}K_1K_2}{\tau_1^2}\bigg(1+\tau_1-\alpha W^EK_1 \bigg)
+p_0\bigg(1-\alpha W^EK_1 \bigg)
\end{align}
Since ${W}^{EI*},K_1,\tau_1$ are all positive, and $K_2\ge 0$  \eqref{eq_coef2} is positive when
\eqref{condition_NoDelay2}  holds. Thus no pure imaginary roots exist.

If $\beta\ne 0$, solving \eqref{eq:H02} for $\beta W^E q\omega$ and substituting into \eqref{eq:H01} gives
\[(p_2-1-\alpha W^Eq)\omega^2+p_1-p_0-\alpha W^E q=0. \]
Using \eqref{eq:pdef} then shows $\omega$ must satisfy
\begin{equation}
 \frac{1}{\tau_1}\left(1-\alpha W^E K_1\right)\omega^2 +\frac{1}{\tau_1}\left(1-\alpha W^E K_1+W^{EI*}K_1K_2 \right)=0 
 \label{weq0}
\end{equation}
Since $W^{EI*},K_1,K_2$ are all positive, if \eqref{condition_NoDelay2} is satisfied then \eqref{weq0} has no real solution for $\omega$ and no pure imaginary roots exist.
\end{proof}

\begin{proposition}\label{thm:negrtsfactorC}
If   
\eqref{condition_NoDelay2} is satisfied
then all the roots of \eqref{charac_eq_ODE} have negative real part.
\end{proposition}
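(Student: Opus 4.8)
The plan is to establish that the cubic polynomial equation \eqref{charac_eq_ODE} has all roots in the open left half-plane by combining Lemma~\ref{lem:zerort} with a homotopy/continuity argument. First I would note that \eqref{charac_eq_ODE} can be rewritten as an honest cubic polynomial in $\lambda$ by expanding $-r_k W^E q\lambda(\lambda+1)$, namely
\[ \lambda^3 + (p_2 - r_k W^E q)\lambda^2 + (p_1 - r_k W^E q)\lambda + p_0 = 0, \]
so this is genuinely a degree-three polynomial with (possibly complex) coefficients, and in particular it always has exactly three roots counted with multiplicity. The key structural fact is that Lemma~\ref{lem:zerort} tells us that under condition \eqref{condition_NoDelay2} no root ever lies on the imaginary axis.

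The main step is then a continuation argument. Introduce a homotopy parameter, most naturally by scaling $W^E \mapsto t W^E$ for $t \in [0,1]$ (equivalently scaling $r_k$, or scaling $q$). At $t = 0$ the polynomial reduces to $\lambda^3 + p_2\lambda^2 + p_1\lambda + p_0 = 0$, which has real positive coefficients $p_0, p_1, p_2 > 0$; I would check the Routh--Hurwitz condition $p_1 p_2 - p_0 > 0$ for this reduced cubic (using the explicit forms in \eqref{eq:pdef}, this should follow from positivity of all the constituent quantities, essentially as in the $\alpha = 0$ instance of the computation \eqref{eq_coef2}), so all three roots have negative real part at $t = 0$. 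For every $t \in [0,1]$, condition \eqref{condition_NoDelay2} still holds with $W^E$ replaced by $t W^E$ since $t \le 1$ and all quantities are nonnegative, so by Lemma~\ref{lem:zerort} no root crosses the imaginary axis as $t$ varies. Since the roots of a polynomial depend continuously on its coefficients, and the coefficients depend continuously (in fact polynomially) on $t$, the number of roots with positive real part is constant along the homotopy; hence it equals its value at $t=0$, which is zero. Therefore all roots of \eqref{charac_eq_ODE} have negative real part.

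The main obstacle I anticipate is verifying carefully that the Routh--Hurwitz inequality $p_1 p_2 > p_0$ holds for the unperturbed cubic; this requires expanding the products using \eqref{eq:pdef} and checking positivity term by term, but it is essentially the computation already carried out in \eqref{eq_coef2} specialized to $\alpha = 0$, so it should be routine. A secondary point requiring a little care is the continuity-of-roots argument: one should phrase it so that it is clear that roots cannot escape to infinity (the leading coefficient is identically $1$, so this is immediate) and cannot appear or disappear (the degree is constant), which together with the no-imaginary-axis-crossing guarantee from Lemma~\ref{lem:zerort} pins down the root count in each half-plane. An alternative to the homotopy argument would be to apply the Routh--Hurwitz criterion directly to the full cubic with coefficients $p_2 - r_k W^E q$, $p_1 - r_k W^E q$, $p_0$, but since these coefficients are complex when $\beta \ne 0$ the standard real Routh--Hurwitz test does not apply verbatim and one would need the Bilharz or Hermite--Biehler form; the homotopy route sidesteps that complication.
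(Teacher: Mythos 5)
Your proposal is correct and follows essentially the same route as the paper: the paper also verifies Routh--Hurwitz at $W^E=0$ (where the cubic has real positive coefficients with $p_1p_2-p_0>0$) and then continues in $W^E$, using Lemma~\ref{lem:zerort} to rule out imaginary-axis crossings, so no root can move into the right half-plane. Your scaling $W^E\mapsto tW^E$ is just a reparametrization of the paper's continuation in $W^E$, and your observation that condition \eqref{condition_NoDelay2} persists along the homotopy is the same (implicit) point the paper relies on.
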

\begin{proof}
Suppose $W^E=0$. Then \eqref{charac_eq_ODE} becomes
\[ \lambda^3+p_2\lambda+p_1\lambda+p_0=0.\]
From \eqref{eq:pdef} it is easy to check that $p_2>0,\ p_0>0$ and $p_2p_1-p_0>0$. Thus by the Routh-Hurwitz criterion all the roots of \eqref{charac_eq_ODE} have negative real part if $W^E=0$. Since \eqref{charac_eq_ODE} depends continuously on $W^E$ the only way the number of roots with positive real part can change as $W^E$ increases is if at some value of $W^E>0$ there exist roots with zero real part. By Lemma~\ref{lem:zerort}, however, if \eqref{condition_NoDelay2} is satisfied, there are no roots with zero real part. The result follows.
\end{proof}
The proof of  Theorem~\ref{thm:stabnodelay}
follows from Proposition ~\ref{thm:negrtsfactorC} and the fact that $|r_k|\le 1$ for all $r_k$.

\section{Stability for the system with small mean delay}
\label{appendix_stabsmalldelay}

Here we give the proof Theorem~\ref{thm:stabsmalldelay} of that if the mean delay, $\tau_m$ is sufficiently small then 
the stability condition for the system with zero delay still holds.

\begin{proof}
For the case of a Dirac delta distribution, $g(s)=\delta(s-\tau_m)$, the result 
follows from a general small delay result that can be found in  \cite[Section III.4]{ElsNor} or \cite[Theorem 4.4]{smith2011introduction}. The proof  can be extended to the other cases as we outline below.

Write the $k^{th}$ factor of the characteristic equation \eqref{charac_k} as
\begin{equation}
h_k(\lambda,\tau_m)=\det[\lambda \bI-\bL_0-r_kW^E\hat{G}(\lambda,\tau_m)\bM]=
\lambda^3+p_2\lambda^2+p_1\lambda+p_0- r_k qW^E\lambda(\lambda+1)\hat{G}(\lambda,\tau_m)=0.
\label{chareq_gen}
\end{equation} 
where $\bL_0$, $\bM$ are defined in \eqref{matrixLoM}, $p_j$ are defined \eqref{eq:pdef} and
\begin{equation}
\begin{array}{rcl}
\hat{G}(\lambda,\tau_m)&=& e^{-\lambda\tau_m} \quad \mbox{(Dirac delta)}\\
\hat{G}(\lambda,\tau_m)&=& e^{-\lambda\tau_m}\sinhc(\lambda\sigma)\quad \mbox{(uniform)}\\
\hat{G}(\lambda,\tau_m)&=& \frac{1}{1+\lambda\tau_m} \quad \mbox{(weak gamma)}\\
\hat{G}(\lambda,\tau_m)&=& \frac{1}{(1+\frac12\lambda\tau_m)^2} \quad \mbox{(strong gamma)}
\end{array}\label{Gchoice}
\end{equation}

To begin note from Theorem~\ref{thm:stabnodelay} that if $W^EK_1<1$ then for each $k$ the three roots of 
\[  h_k(\lambda,0)=
\lambda^3+p_2\lambda^2+p_1\lambda+p_0- r_k   W^E q\lambda(\lambda+1)=0\]
all have negative real parts. 

To apply the proof of \cite[Theorem 4.4]{smith2011introduction} to
the characteristic equation \eqref{chareq_gen} requires two results
\begin{enumerate}
\item $h_k(\lambda,\tau_m)$ is continuous
\item Given a sequence of points $(z_n,\tau_n)$ such that $h_k(z_n,\tau_n)=0$, $\Re(z_n)\ge\xi$ for some fixed $\xi$, 
$\tau_n\rightarrow 0$ and $|z_n|\rightarrow\infty$, we have
$\lim_{n\rightarrow\infty}\frac{\hat{G}(z_n,\tau_n)}{z_n}=0$
\end{enumerate}

1. For the uniform distribution, $h_k(\lambda,\tau_m)$ is continuous, since the exponential and sinc functions are continuous.
Further, the characteristic equations for the weak and strong gamma cases are defined for $\Re(\lambda)>-1/\tau_m$, and on this region $h_k(\lambda,\tau_m)$ is continuous. 

2. Let  $z=x_n+iy_n$, with $x_n\ge \xi>-\frac{1}{\tau_n}$. 
Consider the weak gamma distribution. For each $n$ we have
\begin{align*}   
\left|\frac{\hat{G}(z_n,\tau_n)}{z_n}\right| & =  \frac{1}{\left|z_n\right|\left|1+z_n\tau_n\right|}\\
&= \frac{1}{\left|z_n\right|\sqrt{(1+\tau_nx_n)^2+(\tau_ny_n)^2}}\\
& \le \frac{1}{\left|z_n\right||1+\tau_n x_n|}\\
& \le \frac{1}{\left|z_n\right||1+\tau_n \xi|}
\end{align*}
The result follows since $\tau_n\rightarrow 0$ and $|z_n|\rightarrow\infty$ as $n\rightarrow\infty$.
The proof for the strong gamma distribution is similar.

For the uniform distribution, recall that $\sigma<\tau_m$, thus for each $\tau_n$ we must have a corresponding $\sigma_n<\tau_n$ with $\sigma_n\rightarrow 0$ as $n\rightarrow\infty$. Then for each $n$ we have
\begin{align*}   
\left|\frac{\hat{G}(z_n,\tau_n)}{z_n}\right| & =
\left| \frac{1}{2\sigma_nz_n} \int_{\tau_n-\sigma_n}^{\tau_n+\sigma_n}
e^{-z_n s}\, ds\right|\\
&\le  \frac{1}{2\sigma_n|z_n|} \int_{\tau_n-\sigma_n}^{\tau_n+\sigma_n}
|e^{-z_n s}|\, ds\\
&\le \frac{1}{2\sigma_n|z_n|} \int_{\tau_n-\sigma_n}^{\tau_n+\sigma_n}
e^{-x_n s}\, ds\\
&\le \frac{1}{2\sigma_n|z_n|} \int_{\tau_n-\sigma_n}^{\tau_n+\sigma_n}
e^{-\xi s}\, ds\\
&\le \frac{e^{-\xi\tau_n}}{|z_n|}{{\rm sinhc}
\left({\xi\sigma_n}\right)}\\
\end{align*}
The result follows since $\tau_n\rightarrow 0$, $\sigma_n\rightarrow 0$ and $|z_n|\rightarrow\infty$ as $n\rightarrow\infty$.

Thus the proof of \cite[Theorem 4.4]{smith2011introduction} can be applied to characteristic equation \eqref{charac_k} in all cases to show the following. Let $\lambda^0_1,\lambda^0_2,\lambda^0_3$ be the roots of $h_k(\lambda,0)=0$.  Then for small enough mean delay, $\tau_m$, each root $\lambda$ of $h_k(\lambda,\tau_m)=0$ is either close to one of the $\lambda^0_i$
or satisfies
  $\Re(\lambda)<\min_i\Re(\lambda^0_i)$. It follows that if $W^EK_1<1$ all roots of \eqref{charac_k} for each $k$
  have negative real part.
\end{proof}

\section{Curves of Pure Imaginary Eigenvalues.}
\label{appendix_C}
In this section we determine properties of the curves of pure imaginary eigenvalues when there is no delay in the system. Then we
find expressions for the curves of pure imaginary eigenvalues when the system has a various specific distributions of delays. 
\subsection{Zero delay}
As shown in Appendix~\ref{appendix_stabnodelay} the $k^{th}$ factor of the characteristic equation in this case is \eqref{charac_eq_ODE}, which has pure imaginary eigenvalues if equations \eqref{eq:H01}-\eqref{eq:H02} are satisfied.
First we consider some simple consequences of one of the stability results in Appendix~\ref{appendix_stabnodelay}.
\begin{proposition}\label{prop_negalp}
If $\alpha\le 0$, there are no  curves of pure imaginary eigenvalues in the region $W^E>0$.
\end{proposition}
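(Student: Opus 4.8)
The plan is to obtain this as an immediate consequence of Lemma~\ref{lem:zerort}. That lemma asserts that whenever $\alpha W^E K_1 < 1$ the cubic \eqref{charac_eq_ODE} has no root with zero real part, and in particular no pure imaginary root $\lambda = i\omega$. First I would recall from \eqref{Kdef} that $K_1 = ap(1-p)$ is strictly positive, since $a > 0$ and $0 < p < 1$. Then, for any eigenvalue $r_k = \alpha + i\beta$ with $\alpha \le 0$ and any $W^E > 0$, one has $\alpha W^E K_1 \le 0 < 1$, so the hypothesis \eqref{condition_NoDelay2} of Lemma~\ref{lem:zerort} is satisfied at every point of the half-plane $W^E > 0$. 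Hence \eqref{charac_eq_ODE} carries no pure imaginary eigenvalue anywhere in that region, which means no curve of pure imaginary eigenvalues can intersect it.

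As an alternative, fully self-contained route I would go back to the characterizing equations \eqref{eq:H01}--\eqref{eq:H02} directly: when $\alpha \le 0$ the term $-\alpha W^E q$ is nonnegative, so the coefficient expression \eqref{eq_coef2} (arising in the case $\beta = 0$) and the quadratic \eqref{weq0} (arising in the case $\beta \ne 0$) both become sums of terms that are nonnegative with at least one strictly positive; neither can therefore vanish, so there is no real $\omega$ satisfying the pair \eqref{eq:H01}--\eqref{eq:H02}. I do not anticipate any genuine obstacle here: the statement is essentially a restatement of the $W^E > 0$, $\alpha \le 0$ special case of the computation already carried out in Appendix~\ref{appendix_stabnodelay}, and the only point requiring a moment's care is observing that the inequality $\alpha W^E K_1 < 1$ holds strictly precisely because $\alpha W^E K_1 \le 0$.
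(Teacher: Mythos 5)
Your primary argument is exactly the paper's proof: with $K_1>0$, $\alpha\le 0$ and $W^E>0$ give $\alpha W^E K_1\le 0<1$, so Lemma~\ref{lem:zerort} rules out pure imaginary roots of \eqref{charac_eq_ODE}, and your observation that the inequality holds (indeed non-strictly at $\alpha=0$, still below $1$) is the only point of care. The self-contained alternative via \eqref{eq_coef2} and \eqref{weq0} is also correct but merely unwinds the lemma's proof, so it adds nothing beyond the paper's route.
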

\begin{proof}
If $\alpha\le 0$ and $W^E>0$ the $\alpha W^E K_1<0$ and by Lemma~\ref{lem:zerort}
the characteristic equation has no pure imaginary roots.
\end{proof}

\begin{proposition}\label{Th_order_curves}
The curves $W^{E}_{Hopf}(W^{IE})$ defined by \eqref{eq:WEhopf}-\eqref{eq:omega} satisfy
\[ \lim_{W^{IE}\rightarrow 0} W^{E}_{Hopf}(W^{IE})=\lim_{W^{IE}\rightarrow\infty} W^{E}_{Hopf}(W^{IE})=\frac{1}{\alpha K_1}. \]
\end{proposition}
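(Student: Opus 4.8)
The plan is to work in the zero-delay setting of this subsection, where $C(\omega)=1$ and $S(\omega)=0$, so that the conditions for a purely imaginary root $\lambda=i\omega$ of \eqref{charac_eq_ODE} are exactly \eqref{eq:H01}--\eqref{eq:H02} (equivalently, \eqref{eq:ReIm_1e}--\eqref{eq:ReIm_2e} with $C\equiv1$, $S\equiv0$). Since we are describing a curve of pure imaginary eigenvalues in the region $W^E>0$, Proposition~\ref{prop_negalp} forces $\alpha>0$, which I use throughout. Rather than feed a root $\omega$ of \eqref{eq:omeganodelay} into \eqref{eq:WEhopfnodelay}, I would combine the two scalar conditions so as to isolate the entire $W^{IE}$-dependence into a single quantity.

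Concretely, rewrite \eqref{eq:H01}--\eqref{eq:H02} as
\[
p_2\omega^2-p_0=W^Eq\,(\alpha\omega^2+\beta\omega),\qquad p_1-\omega^2=W^Eq\,(\alpha-\beta\omega),
\]
and add them: the $\beta\omega$ terms cancel and the right-hand side collapses to $W^Eq\,\alpha(\omega^2+1)$. Substituting $p_1=p_{1a}+p_{1b}W^Eq$, using $p_2-1=\tfrac1{\tau_1}$ and $q=\tfrac{K_1}{\tau_1}$ from \eqref{eq:pdef}, and the identity $p_{1a}-p_0=\tfrac1{\tau_1}\big(1+c\,p_{1b}\big)$ with $c:=(1-p)\ln\!\big(\tfrac{p}{1-p}\big)$ a fixed constant (which follows from \eqref{eq:p1ab}--\eqref{eq:p1bb}), one arrives at the key identity
\[
W^E_{Hopf}(W^{IE})=\frac{\omega^2+1+c\,p_{1b}}{\tau_1 q\big(\alpha(\omega^2+1)-p_{1b}\big)},
\]
valid at every point of the curve, where $\omega=\omega(W^{IE})$ is the associated frequency. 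The whole $W^{IE}$-dependence of the right-hand side now sits inside the single scalar $p_{1b}=apW^{IE}(1-I^*)$.

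Finally I would take the two limits. At $W^{IE}=0$ we have $p_{1b}=0$ outright, and as $W^{IE}\to\infty$ we have $p_{1b}\to0$ because $1-I^*=\big(1+e^{aW^{IE}p}\big)^{-1}$ decays exponentially while only the polynomial factor $W^{IE}$ multiplies it. Dividing numerator and denominator of the key identity by $\omega^2+1\ge1$ gives
\[
W^E_{Hopf}(W^{IE})=\frac1{\tau_1 q}\cdot\frac{1+c\,p_{1b}/(\omega^2+1)}{\alpha-p_{1b}/(\omega^2+1)},
\]
and since $\big|c\,p_{1b}/(\omega^2+1)\big|\le|c|\,|p_{1b}|$ and $\big|p_{1b}/(\omega^2+1)\big|\le|p_{1b}|$ both tend to $0$ in each limit, the quotient tends to $1/\alpha$; hence $W^E_{Hopf}\to\tfrac1{\tau_1 q\alpha}=\tfrac1{\alpha K_1}$ in both cases.

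Routing the argument through the key identity is meant to sidestep the main obstacle: the frequency $\omega(W^{IE})$ has no evident limit, and arguing directly from \eqref{eq:WEhopfnodelay}--\eqref{eq:omeganodelay} would require identifying which root of the quartic is tracked and proving that it converges. The bound $|p_{1b}/(\omega^2+1)|\le|p_{1b}|$ renders the limit completely insensitive to $\omega$. The only remaining point to check is that the denominator stays bounded away from $0$ near the limits, which holds since $\alpha(\omega^2+1)-p_{1b}\ge\alpha-p_{1b}>0$ once $p_{1b}<\alpha$; a minor bookkeeping step is verifying $p_2-1=1/\tau_1$ and $p_{1a}-p_0=\tfrac1{\tau_1}(1+c\,p_{1b})$ from \eqref{eq:pdef} and \eqref{eq:p1ab}--\eqref{eq:p1bb}.
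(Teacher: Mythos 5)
Your proof is correct, and it takes a somewhat different route from the paper's. The paper first passes to the limit in the coefficients ($p_{1b}\to 0$, $p_{1a}\to \tfrac1{\tau_1}+p_0$), substitutes these limiting values into \eqref{eq:omega} so that it factors as $\alpha(\omega^2+1)\bigl(\omega^2-\tfrac{\beta}{\alpha\tau_1}\omega-p_0\bigr)=0$, and then plugs the resulting relation for $\omega^2$ into $W^E_{Hopf}=\frac{\omega^2-p_1}{q(\beta\omega-\alpha)}$ (obtained from \eqref{eq:H02}) to get $\tfrac1{\alpha K_1}$. You instead add the real and imaginary part equations \eqref{eq:H01}--\eqref{eq:H02} so that the $\beta$-terms cancel, and obtain the exact identity $W^E_{Hopf}=\dfrac{\omega^2+1+c\,p_{1b}}{\tau_1 q\bigl(\alpha(\omega^2+1)-p_{1b}\bigr)}$ valid at every point of the curve, with all $W^{IE}$-dependence concentrated in $p_{1b}$; the limits then follow from $p_{1b}\to 0$ together with the $\omega$-uniform bounds $|p_{1b}/(\omega^2+1)|\le p_{1b}$. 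What your version buys is rigor at the limit step: the paper's argument tacitly assumes the frequency $\omega(W^{IE})$ converges to a root of the limiting equation, whereas your identity makes the conclusion insensitive to the behaviour of $\omega$, and it also makes transparent why $\beta$ drops out of the answer. Two small points to keep in mind: your identity $p_{1a}-p_0=\tfrac1{\tau_1}(1+c\,p_{1b})$ is specific to the logistic nonlinearity \eqref{eq:p1ab}--\eqref{eq:p1bb} (the paper's limit argument only needs $K_2\to0$, though it too specializes to the logistic), and your computation is carried out in the zero-delay setting $C\equiv1$, $S\equiv0$ of \eqref{eq:WEhopfnodelay}--\eqref{eq:omeganodelay} — which is indeed the setting intended by the paper (the proposition sits in the zero-delay part of Appendix C, and the stated endpoint value does not persist for general kernels), so your restriction is appropriate rather than a gap.
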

\begin{proof}
Recall from the proof of Proposition~\ref{thm:endpt} that 
\[ \lim_{W^{IE}\rightarrow 0} p_1
= \lim_{W^{IE}\rightarrow \infty} p_1
=\frac{1}{\tau_1}+p_0,
\]
which is independent of $W^E$. Putting these limits into \eqref{eq:omega} show that it simplifies to
\[ \alpha(\omega^2+1)\left(\omega^2-\frac{\beta}{\alpha\tau_1}\omega-p_0\right)=0 \quad \Rightarrow \omega^2=\frac{\beta}{\alpha\tau_1}\omega+p_0. \]
The following equivalent expression to \eqref{eq:WEhopf} can be found by solving \eqref{eq:H02} for $W^E$:
\[ W^E_{Hopf}=\frac{\omega^2-p_1}{q(\beta\omega-\alpha)}\]
Using the expressions for $p_1$ and $\omega^2$ above then gives
\[ W^{E}_{Hopf}=\frac{1}{\alpha K_1}\frac{\beta\omega+\alpha\tau_1(p_0-p_1)}{\beta\omega-\alpha}=\frac{1}{\alpha K_1}.\]
\end{proof}
\begin{proposition}
The curves of pure imaginary eigenvalues satisfy
\[ \frac{1}{\alpha K_1} \le W^E_{Hopf}(W^{IE}), \]
with equality only at the endpoints.
\end{proposition}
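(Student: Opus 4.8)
The plan is to get the inequality straight from the stability result Lemma~\ref{lem:zerort}, and then to pin down the equality case by evaluating the pure--imaginary--root conditions \eqref{eq:H01}--\eqref{eq:H02} at $W^E=1/(\alpha K_1)$. First, by Proposition~\ref{prop_negalp} a curve of pure imaginary eigenvalues in $W^E>0$ can exist only when $\alpha>0$, so $1/(\alpha K_1)$ is well defined and positive. If $(W^{IE},W^E_{Hopf}(W^{IE}))$ lies on such a curve, then \eqref{charac_eq_ODE} has a root with zero real part there; recalling that $q=K_1/\tau_1$, the contrapositive of Lemma~\ref{lem:zerort} forces $\alpha\,W^E_{Hopf}(W^{IE})\,K_1\ge 1$, and dividing by $\alpha K_1>0$ yields $W^E_{Hopf}(W^{IE})\ge 1/(\alpha K_1)$.

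For the equality statement, suppose $W^E_{Hopf}(W^{IE})=1/(\alpha K_1)$ at some $W^{IE}$. Then $\alpha W^E_{Hopf}q=1/\tau_1$, so by \eqref{eq:pdef} we have $p_2-\alpha W^E_{Hopf}q=1$ and the factor $1-\alpha W^E_{Hopf}K_1$ occurring throughout the proof of Lemma~\ref{lem:zerort} vanishes. I would then follow the two subcases of that proof. When $\beta=0$, a pure imaginary root requires $(p_2-\alpha W^Eq)(p_1-\alpha W^Eq)-p_0=0$; inserting $p_2-\alpha W^Eq=1$ reduces this to $p_1-\tfrac{1}{\tau_1}-p_0=0$, which by \eqref{eq:pdef} is precisely $W^{EI*}K_1K_2/\tau_1=0$. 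When $\beta\ne 0$, equation \eqref{weq0} with $1-\alpha W^EK_1=0$ collapses to the very same condition $W^{EI*}K_1K_2/\tau_1=0$. In either subcase $K_1,\tau_1>0$ and $W^{EI*}>0$ in the regime of interest (for the logistic $\phi$ with $p<1/2$ one has $\phi^{-1}(p)<0$, so $W^{EI*}=(W^Ep-\phi^{-1}(p))/I^*>0$), hence $K_2=W^{IE}\phi'(W^{IE}p)=0$; since $\phi'>0$ this forces $W^{IE}=0$. Conversely, $K_2\to 0$ as $W^{IE}\to\infty$ because $I^*\to 1$ exponentially fast, and Proposition~\ref{Th_order_curves} already gives $W^E_{Hopf}\to 1/(\alpha K_1)$ at both $W^{IE}=0$ and $W^{IE}\to\infty$. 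Thus equality holds exactly at the two endpoints.

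The step I expect to be the main obstacle is the equality case: one must check carefully that at $W^E=1/(\alpha K_1)$ the two consistency relations coming from the real and imaginary parts of $C_k(i\omega)=0$ genuinely degenerate to the single identity $W^{EI*}K_1K_2=0$ in both the real-eigenvalue and complex-eigenvalue subcases, and one should also confirm that at $W^{IE}=0$ this condition is not merely necessary but actually produces a root $\lambda=i\omega$ with $\omega>0$ --- which it does, since there both \eqref{eq:H01} and \eqref{eq:H02} reduce to $\omega^2-\tfrac{\beta}{\alpha\tau_1}\omega-p_0=0$, whose positive root supplies the required $\omega$. A minor bookkeeping point is the degenerate alternative $W^{EI*}=0$ flagged by the factorisation of $W^{EI*}K_1K_2$; it does not arise for the logistic nonlinearity with $p<1/2$, so the conclusion is unaffected.
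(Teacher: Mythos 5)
Your proof is correct and takes essentially the same route as the paper: the inequality from the contrapositive of Lemma~\ref{lem:zerort} (with Proposition~\ref{prop_negalp} restricting to $\alpha>0$), equality at the endpoints from Proposition~\ref{Th_order_curves}, and strictness for $0<W^{IE}<\infty$ by re-examining the equations in the proof of Lemma~\ref{lem:zerort} at $\alpha W^E K_1=1$, where they force $W^{EI*}K_1K_2=0$ and hence $K_2=0$. This is just an explicit rephrasing of the paper's remark that the lemma's proof carries over with $\alpha W^E K_1\le 1$ whenever $K_2>0$, so no further comment is needed.
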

\begin{proof} 
By Proposition~\ref{prop_negalp}, we need only consider $\alpha>0$. In this case, the inequality follows from Lemma~\ref{lem:zerort}. From Proposition~\ref{Th_order_curves}, it is clear that equality holds at the endpoints. To see that it only holds there, note that $K_2>0$ for $W^{IE}>0$ and the proof of Lemma~\ref{lem:zerort}, carries over with $\alpha W^E K_1\le 1$ if $K_2>0$.
\end{proof}
\begin{proposition}
If  $r_k=\alpha\in\mathbb{R}$,  then the curves of pure imaginary eigenvalues satisfy
\[ W^E_{Hopf}(W^{IE}) < \frac{1+\tau_1}{\alpha K_1}. \]
\end{proposition}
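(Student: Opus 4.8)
The plan is to read the bound off directly from the pure-imaginary-root condition \eqref{eq:H01}, using a sign argument, rather than manipulating the closed-form expression \eqref{eq:WEhopfnodelay}. By Proposition~\ref{prop_negalp} we may assume $\alpha>0$: if $\alpha\le 0$ there are no curves of pure imaginary eigenvalues in the region $W^E>0$, so the inequality holds vacuously.

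So suppose $r_k=\alpha\in\mathbb R$ with $\alpha>0$, and let $(W^{IE},W^E_{Hopf}(W^{IE}))$ be a point on a curve of pure imaginary eigenvalues, with associated root $\lambda=i\omega$, $\omega>0$. Since $r_k$ is real we have $\beta=0$, and \eqref{eq:H01} reduces to $(p_2-\alpha W^E_{Hopf}\,q)\,\omega^2=p_0$. Now $\omega^2>0$ by construction, and $p_0>0$ by \eqref{eq:pdef}; hence the factor $p_2-\alpha W^E_{Hopf}\,q$ must be \emph{strictly} positive (it cannot vanish, else $p_0=0$). Therefore $\alpha W^E_{Hopf}\,q<p_2$.

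To finish, I would substitute the explicit values $q=K_1/\tau_1$ and $p_2=1+1/\tau_1=(1+\tau_1)/\tau_1$ from \eqref{eq:pdef}, turning $\alpha W^E_{Hopf}\,q<p_2$ into $\alpha K_1 W^E_{Hopf}<1+\tau_1$; dividing through by $\alpha K_1>0$ yields $W^E_{Hopf}(W^{IE})<\tfrac{1+\tau_1}{\alpha K_1}$, as claimed.

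There is essentially no obstacle here: the argument is a one-line sign analysis of \eqref{eq:H01}. The only points that deserve a word of care are (i) the reduction to $\alpha>0$ via Proposition~\ref{prop_negalp}, which is what makes the final division by $\alpha K_1$ preserve the inequality, and (ii) the strictness, which is forced by $p_0>0$ being strict (the equality case in \eqref{curveorderreal} comes only from the \emph{lower} bound at the endpoints $W^{IE}=0$ and $W^{IE}\to\infty$, not from this upper bound).
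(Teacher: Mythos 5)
Your argument is correct and is essentially the paper's own proof: both read the bound off equation \eqref{eq:H01} with $\beta=0$, noting that $(p_2-\alpha W^E q)\omega^2=p_0>0$ with $\omega^2>0$ forces $\alpha W^E q<p_2$, i.e.\ $W^E_{Hopf}<p_2/(q\alpha)=(1+\tau_1)/(\alpha K_1)$. Your explicit handling of $\alpha\le 0$ via Proposition~\ref{prop_negalp} and the remark on strictness are fine additional care but do not change the route.
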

\begin{proof} 
Setting $\beta=0$ in \eqref{eq:H01} and rearranging we have
\[ \omega^2=\frac{p_0}{p_2-W^E q\alpha}\]
From \eqref{eq:pdef} $p_2,p_0$ and $q$ are all positive. Thus \eqref{eq:H01} can only be satisfied if $W_E<p_2/(q\alpha) $. Putting this together with Lemma~\ref{lem:zerort} gives the result.
\end{proof}

In the case of a real eigenvalue of the connectivity matrix, $r_k=\alpha\in{\mathbb R}$, we can analyze the curves of pure imaginary eigenvalues in some detail. In this case, equations \eqref{eq:H01}-\eqref{eq:H02} may be rewritten
\begin{align}
p_0-p_2 \Omega&=-\alpha W^E q\,\Omega \label{eq:ReIm_1r} \\
p_{1a}+p_{1b} W^Eq-\Omega&= \alpha W^E q\label{eq:ReIm_2r}.
\end{align}
where $\Omega=\omega^2$. Eliminating $W^{E}$ gives  
\begin{equation} \Omega^2+B_2\Omega+B_0=0 \label{Om_eq} \end{equation}
with
\[ B_2=-p_{1a}+p_2\left(1-\frac{p_{1b}}{\alpha}\right),\qquad
B_0=p_0\left(\frac{p_{1b}}{\alpha}-1\right). \]
Solving \eqref{Om_eq} gives
\begin{equation} 
\Omega_{\pm}= -\frac{B_2}{2}\pm\sqrt{\frac{B_2^2}{4}-B_0}. 
\label{Om_rts}
\end{equation}
Note that only positive $\Omega_{\pm}$ correspond to the characteristic equation having pure imaginary roots. 
It follows that for a given set of parameters there can be up to
two values of $W^E$ at which pure imaginary eigenvalues occur:
\begin{equation} 
W^{E}_{Hopf\pm}=\frac{1}{q\alpha}\left(p_2-\frac{p_0}{\Omega_\pm}\right), 
\label{WEhopf}
\end{equation}
or, equivalently,
\begin{equation} 
W^{E}_{Hopf\pm}=\frac{p_{1a}-\Omega\pm}{q(\alpha-p_{1b})}.
\label{WEhopf2}
\end{equation}
The existence and properties of these points depend on the signs of $B_2$ and $B_0$.

\begin{proposition}\label{prop_onecurve}
If $r_k=\alpha\in{\mathbb R}$ there is at most one curve of pure imaginary eigenvalues of \eqref{charac_eq_ODE} lying in the region $W^E>0$.
\end{proposition}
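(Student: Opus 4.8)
The plan is first to dispose of the case $\alpha\le 0$ by citing Proposition~\ref{prop_negalp}, which already gives that there are no curves of pure imaginary eigenvalues in $W^E>0$ there; so I may assume $\alpha>0$. For a fixed $W^{IE}$, a pure imaginary eigenvalue $\lambda=i\omega$ of \eqref{charac_eq_ODE} with $r_k=\alpha$ requires $\Omega=\omega^2>0$ to be a root of the quadratic \eqref{Om_eq}, $f(\Omega):=\Omega^2+B_2\Omega+B_0=0$, and the corresponding weight is $W^{E}_{Hopf\pm}=\tfrac{1}{q\alpha}\bigl(p_2-p_0/\Omega_\pm\bigr)$ by \eqref{WEhopf}. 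Since $q,\alpha>0$, the admissibility condition $W^E>0$ is equivalent to $\Omega>p_0/p_2$. Thus it suffices to show that of the two roots $\Omega_\pm$ in \eqref{Om_rts} at most one satisfies $\Omega>p_0/p_2$; in fact it will always be the larger root $\Omega_+$, so the curves in $W^E>0$ reduce to the single branch $W^{IE}\mapsto W^{E}_{Hopf+}(W^{IE})$.

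The key step is to evaluate $f$ at the threshold $\Omega_0:=p_0/p_2$. Substituting $B_2=-p_{1a}+p_2(1-p_{1b}/\alpha)$ and $B_0=p_0(p_{1b}/\alpha-1)$, the terms proportional to $p_{1b}/\alpha$ cancel identically, and one is left with
\[
f\!\left(\frac{p_0}{p_2}\right)=\frac{p_0}{p_2}\left(\frac{p_0}{p_2}-p_{1a}\right).
\]
Because $p_2=1+1/\tau_1>1$ and $p_{1a}>p_0>0$ (the bounds recorded just after \eqref{eq:p1bb}), we get $0<p_0/p_2<p_0<p_{1a}$, so $f(p_0/p_2)<0$.

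Since $f$ is an upward-opening parabola, $f(p_0/p_2)<0$ forces its two roots to be real and distinct with $\Omega_-<p_0/p_2<\Omega_+$. Hence $\Omega_+>p_0/p_2>0$ is always an admissible value of $\omega^2$ and yields $W^{E}_{Hopf+}>0$, whereas $\Omega_-<p_0/p_2$: if $\Omega_-\le 0$ it is not a legitimate value of $\omega^2$, and if $0<\Omega_-<p_0/p_2$ then $W^{E}_{Hopf-}=\tfrac{1}{q\alpha}(p_2-p_0/\Omega_-)<0$. In both cases the root $\Omega_-$ contributes no point with $W^E>0$, so there is at most one curve of pure imaginary eigenvalues in the region $W^E>0$, namely the graph of $W^{E}_{Hopf+}$.

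I expect the only genuine content to lie in the middle paragraph: realising that the sign of $f$ at $\Omega_0=p_0/p_2$ settles the question, and performing the short cancellation that makes $f(\Omega_0)$ independent of $p_{1b}/\alpha$. The remaining inequality $p_0/p_2<p_{1a}$ is immediate from $p_2>1$ and the already-established bound $p_{1a}>p_0$, and the only bookkeeping is the handling of the degenerate cases $\Omega_-\le 0$, which is covered by the admissibility discussion above.
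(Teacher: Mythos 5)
Your proof is correct, and while it works inside the same framework as the paper --- the roots $\Omega_\pm$ of the quadratic \eqref{Om_eq} together with the sign of $W^{E}_{Hopf\pm}$ from \eqref{WEhopf}, with Proposition~\ref{prop_negalp} disposing of $\alpha\le 0$ --- your decisive step is different. The paper splits into cases according to the sign of $B_0$ (i.e.\ whether $p_{1b}\le\alpha$ or $0<\alpha<p_{1b}$) and, in the latter case, bounds $\Omega_-$ by expanding the square root in \eqref{Om_rts}; you instead evaluate $f(\Omega)=\Omega^2+B_2\Omega+B_0$ at the threshold $\Omega_0=p_0/p_2$ and exploit the cancellation $f(\Omega_0)=\tfrac{p_0}{p_2}\bigl(\tfrac{p_0}{p_2}-p_{1a}\bigr)<0$, which holds since $p_2>1$ and $p_{1a}>p_0$ (the bound recorded after \eqref{eq:p1bb}). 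This buys several things at once: no case split on $\alpha$ versus $p_{1b}$, an automatic proof that the discriminant of \eqref{Om_eq} is positive (so the paper's subcases $B_0\ge\tfrac14B_2^2$ are in fact vacuous), the exact placement $\Omega_-<p_0/p_2<\Omega_+$, and, as a by-product, existence (not merely uniqueness) of the admissible branch $W^{E}_{Hopf+}>0$. It also sidesteps the series-expansion inequality in the paper's second case, which as printed bounds $\Omega_-$ from the wrong side: with $B_2<0$ the expansion $\Omega_-=-B_0/B_2-B_0^2/B_2^3-\cdots$ has positive correction terms, so $\Omega_->-B_0/B_2$, and the stated chain $\Omega_-<-B_0/B_2<p_0/p_2$ does not follow as written; your threshold evaluation delivers the needed conclusion $\Omega_-<p_0/p_2$ directly. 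The only mild cost is that your argument leans on the global bound $p_{1a}>p_0$ throughout, which the paper's first case ($p_{1b}\le\alpha$, where $B_0\le0$ already forces $\Omega_-\le0$) does not require.
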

\begin{proof}
We consider two cases, depending on the size of $\alpha$. Recall that $\alpha>0$ for the curve to lie in $W^E>0$ and $0<p_{1b}<1.$

If $p_{1b}\le \alpha$ then  $B_0\le 0$, which implies
$\Omega_- \le 0<\Omega_+$.  Thus there are no pure imaginary eigenvalues corresponding to $\Omega_-$.

If $0<\alpha<p_{1b}$ then we have $B_0>0$, $B_2<0$ and  
\[ -\frac{B_2}{B_0}=\frac{p_2}{p_0}+\frac{p_{1a}}{\frac{p_{1b}}{\alpha}-1}>\frac{p_2}{p_0}.\]
If $B_0>\frac14 B_2^2$, no roots of \eqref{Om_eq} exist and there are no pure imaginary roots of the characteristic equation. If $B_0= \frac14 B_2^2$, there is only one root of \eqref{Om_eq}.
If $B_0< \frac14 B_2^2$, the solutions of \eqref{Om_eq} satisfy
$0<\Omega_-<-\frac12 B_2<\Omega_+$ and
\[ \Omega_- = -\frac{B_2}{2}+\frac{B_2}{2}\sqrt{1-\frac{4B_0}{B_2^2}}
=-\frac{B_2}{2}+ \frac{B_2}{2}\left(1- 2\frac{B_0}{B_2^2}-2\frac{B_0^2}{B_2^4} -\cdots\right)
= -\frac{B_0}{B_2}-\frac{B_0^2}{B_2^3}-\cdots
<-\frac{B_0}{B_2}< \frac{p_0}{p_2}.
\]
It follows from \eqref{WEhopf} that the curves of pure imaginary eigenvalues corresponding to $\Omega_-$ lie in $W^E<0$.

\end{proof}
\begin{proposition}
If $r_k=\alpha\in{\mathbb R}$ then $W^E_{Hopf}(W^{IE})$ is a decreasing function of $\alpha$
\end{proposition}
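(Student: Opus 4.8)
The plan is as follows. By Proposition~\ref{prop_negalp} we may assume $\alpha>0$, since otherwise there is no curve in $W^E>0$. By the proof of Proposition~\ref{prop_onecurve}, wherever the curve of pure imaginary eigenvalues lies in $W^E>0$ it is given by $W^E_{Hopf}(W^{IE})=W^E_{Hopf+}$ from \eqref{WEhopf} with $\Omega=\Omega_+$, the larger root of \eqref{Om_eq}. The key structural observation is that $\alpha=\Re(r_k)$ does not enter the definitions \eqref{eq:pdef} of $p_0,p_{1a},p_{1b},p_2,q$: for fixed $W^{IE}$ these are constants, and the only $\alpha$-dependence in $W^E_{Hopf+}$ is through $B_2=-p_{1a}+p_2(1-p_{1b}/\alpha)$, $B_0=p_0(p_{1b}/\alpha-1)$, and hence $\Omega_+$. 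So I would compute $dW^E_{Hopf+}/d\alpha$ by implicit differentiation and show it is negative.

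First I would differentiate, getting $dB_2/d\alpha=p_2p_{1b}/\alpha^2$ and $dB_0/d\alpha=-p_0p_{1b}/\alpha^2$. Differentiating $\Omega_+^2+B_2\Omega_++B_0=0$ in $\alpha$ and dividing by $2\Omega_++B_2=2\sqrt{B_2^2/4-B_0}>0$ then yields
\[
\frac{d\Omega_+}{d\alpha}=-\frac{p_{1b}\,(p_2\Omega_+-p_0)}{\alpha^2\,(2\Omega_++B_2)}.
\]
By \eqref{eq:ReIm_1r} one has $p_2\Omega_+-p_0=\alpha\,q\,W^E_{Hopf+}\,\Omega_+>0$ (all four factors being positive), and $p_{1b}\ge 0$, so $d\Omega_+/d\alpha\le 0$.

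Next I would differentiate \eqref{WEhopf} directly; substituting the expression just found for $d\Omega_+/d\alpha$ and factoring gives
\[
\frac{dW^E_{Hopf+}}{d\alpha}=-\frac{p_2\Omega_+-p_0}{q\,\alpha^2\,\Omega_+}\left(1+\frac{p_0\,p_{1b}}{\alpha\,\Omega_+\,(2\Omega_++B_2)}\right)<0,
\]
since the prefactor is a product of positive quantities and the bracket is at least $1$. (If $p_{1b}=0$ the bracket reduces to $1$ and the argument is unchanged; this is also consistent with Proposition~\ref{Th_order_curves}, whose endpoint value $1/(\alpha K_1)$ is manifestly decreasing in $\alpha$.) The computations are elementary; the one point that needs justification is $2\Omega_++B_2>0$, i.e.\ that the discriminant of \eqref{Om_eq} is strictly positive wherever the $W^E>0$ branch exists. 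This is the main (minor) obstacle, and it follows from the sign analysis of $B_0,B_2$ in the proof of Proposition~\ref{prop_onecurve} together with $p_{1a}>p_0$, which rules out the degenerate double-root case $B_2=B_0=0$.
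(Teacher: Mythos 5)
Your proof is correct and takes essentially the same route as the paper's: implicit differentiation of \eqref{Om_eq} to obtain $d\Omega_+/d\alpha\le 0$ via the identity $p_2\Omega_+-p_0=\alpha\, q\, W^E_{Hopf+}\Omega_+>0$ from \eqref{eq:ReIm_1r}, followed by differentiation of \eqref{WEhopf}; your single factored expression simply combines the paper's two nonpositive terms. The only difference is cosmetic: you explicitly flag the need for $2\Omega_++B_2>0$, which the paper handles by assuming $B_0\le\frac14 B_2^2$ and dividing by $\sqrt{\frac14 B_2^2-B_0}$.
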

\begin{proof}
Assume $B_0\le \frac14 B_2^2$. It follow from Proposition~\ref{prop_onecurve}
that we need only consider $\Omega_+$. 

Now we consider how $W^{E}_{Hopf+}$ varies with $\alpha$ if all the other parameters are fixed. Differentiating \eqref{WEhopf} gives
\[ \frac{dW^{E}_{Hopf+}}{d\alpha}=
-\frac{1}{\alpha}W^{E}_{Hopf+}+\frac{p_0}{q\alpha\Omega^2_+}\frac{d\Omega_+}{d\alpha} \]
Differentiating  \eqref{Om_eq} implicitly with respect to $\alpha$ and using \eqref{WEhopf} yields

\begin{eqnarray*}
\displaystyle\frac{d\Omega_+}{d\alpha}&=&\frac{p_{1b}}{\alpha^2}\,\frac{p_0-p_2\Omega_+}{2\Omega_++B_2}\\
&=&-\frac{p_{1b}}{\alpha}\frac{W^E_{Hopf}\, q\,\Omega_+}{2\Omega_++B_2}
\end{eqnarray*}
Then from \eqref{Om_rts} we have
\[
\displaystyle\frac{d\Omega_{+}}{d\alpha}=-\frac{p_{1b}}{\alpha}\frac{W^E_{Hopf}\, q\,\Omega_{+}}{\sqrt{\frac14 B_2^2-B_0}}
\]
It follows that $\Omega_+$ is a decreasing
function of $\alpha$ which in turn implies $W^{E}_{Hopf}$ is a decreasing function of $\alpha$.
\end{proof}

\subsection{Weak gamma distribution}
\label{appendix_C1}
 The characteristic equation \eqref{gammaeq4} can be written as 
\begin{equation}\label{charac_weak_kernel}
   \Delta_1(\lambda):=\lambda^4+a_3 \lambda^3 +a_2 \lambda^2+ a_1 \lambda+a_0=0
\end{equation}
where
\begingroup
\allowdisplaybreaks
\begin{align}\label{coeff_weak_kernel}
a_3&=1+\gamma+\cfrac{1}{\tau_1},\qquad a_0=\cfrac{K_1\gamma {I^*}^2}{\tau_1\tau_2}\\
    a_2&:=a_{20}+a_{21}W^E:=\Bigg(\gamma+\cfrac{1+\gamma}{\tau_1}+\cfrac{K_1  {I^*}^2}{\tau_1\tau_2}-
\cfrac{K_1K_2 \phi^{-1}(p)}{\tau_1 \phi(W^{IE}p)}\Bigg)+
\Bigg( 
\cfrac{K_1K_2 p}{\tau_1 \phi(W^{IE}p)}-\cfrac{K_1 r_k\gamma}{\tau_1}  \Bigg)W^E\nonumber\\
    a_1&:=a_{10}+a_{11}W^E:=\Bigg( \cfrac{\gamma}{\tau_1}+\cfrac{K_1 (1+\gamma) {I^*}^2}{\tau_1\tau_2}-
\cfrac{K_1K_2 \gamma \phi^{-1}(p)}{\tau_1 \phi(W^{IE}p)} \Bigg)+
\Bigg( 
\cfrac{K_1K_2 p\gamma}{\tau_1 \phi(W^{IE}p)}-\cfrac{K_1 r_k\gamma}{\tau_1}    \Bigg)W^E.\nonumber
\end{align}
\endgroup

Assume $r_k=\alpha\in{\mathbb R}$. Then
substituting $\lambda=i \omega$ ($\omega>0$ and $i=\sqrt{-1}$) into the characteristic polynomial \eqref{charac_weak_kernel}  yields the following:
\begingroup
\allowdisplaybreaks
\begin{subequations}
\begin{align}
   \text{Real part: }&a_0 - a_2 \omega^2 + \omega^4=0\label{wk_real_part}\\
   \text{Imaginary part: }&a_1 \omega - a_3  \omega^3=0\label{wk_img_part}.
\end{align}
\end{subequations}
\endgroup 
Substituting $\omega=\sqrt{a_1/a_3}$, obtained from \eqref{wk_img_part}, in \eqref{wk_real_part} leads to
\begin{equation}\label{eqn_356}
    a_0+\cfrac{a_1^2}{a_3^2}-\cfrac{a_1a_2}{a_3}=0.
\end{equation}
Recall that $a_i$ ($i=1,2$) depends on $W^E$. Using $ a_i=a_{i0}+a_{i1}W^E$ provided in \eqref{coeff_weak_kernel} and solving  \eqref{eqn_356} for $W^E$ the Hopf bifurcation curve in terms of $W^{IE}$:
\begin{neweq}\label{weak_kernel_hopf_curve}
      W^E=W_{\rm Hopf}^{E} \Big(W^{IE}\Big):=& \cfrac{1}
   {2 a_{11} \big(a_{11} - a_{21} a_3\big)} 
   \Bigg( a_{11} a_{20} a_{3} -2 a_{10} a_{11}  + a_{10} a_{21} a_{3}\\
   ~& + a_3 \sqrt{\big( a_{11} a_{20} - a_{10} a_{21} \big)^2 - 4 a_0 a_{11} \big(a_{11} - a_{21} a_{3}\big) }\,\,  \Bigg)
\end{neweq}
Note that
\[ a_3=\gamma+p_2,\quad a_0=\gamma p_0,\quad a_{20}=\gamma+p_{1a}
\quad a_{10}=p_0+\gamma p_{1a}\]
are all positive, while the signs of 
\[ a_{21}=q(p_{1b}-\gamma \alpha),\quad a_{11}=\gamma q(p_{1b}-\alpha)\]
depend on $\gamma$ and $\alpha$.

When $r_k=\alpha+i\beta$ is complex, we write 
\[a_{11}=\gamma \hat{a}_{11}-r_k \bar{a}_{11}\qquad\text{and}\qquad a_{21}= \hat{a}_{11}-r_k \bar{a}_{11}\]
where
\[\hat{a}_{11}=\cfrac{K_1K_2 p}{\tau_1 \phi(W^{IE}p)}\qquad\text{and}\qquad
\bar{a}_{11}=\cfrac{K_1\gamma}{\tau_1}.\]
In this case, the characteristic polynomial \eqref{charac_weak_kernel} at $\lambda=i \omega$ ($\omega>0$ and $i=\sqrt{-1}$) gives
\begingroup
\allowdisplaybreaks
\begin{subequations}
\begin{align}
\text{Real part: }&a_0+  \bar{a}_{11}\beta W^E  \omega +(\bar{a}_{11}\alpha W^E - 
 \hat{a}_{11} W^E- a_{20}) \omega^2 + \omega^4 =0\label{wk_real_part_complex}\\
   \text{Imaginary part: }&w\left(a_{10} +  \hat{a}_{11}\gamma W^E -  \bar{a}_{11}\alpha W^E+  \bar{a}_{11}\beta W^E \omega-a_3 \omega^2   \right)
=0\label{wk_img_part_complex}.
\end{align}
\end{subequations}
\endgroup 
Solving \eqref{wk_img_part_complex} for $\omega$ yields
\[\omega_\pm=\frac{\beta   \bar{a}_{11} W^E \pm \sqrt{\beta ^2 \bar{a}_{11}^2  W^{E^2}
+4\left(  \gamma \hat{a}_{11} a_3-\alpha   \bar{a}_{11} a_3  \right)W^E+4 a_{10} a_3}}{2 a_3}.\]
Substituting $\omega_\pm$ in \eqref{wk_real_part_complex} leads to following equation 
\begin{equation}
    A_4W^{E^4}+A_3 W^{E^3}+ A_2 W^{E^2}+ A_1 W^{E}+ A_0=
    \pm W^{E} \left(B_2 W^{E^2}+ B_1 W^{E}+ B_0\right)\sqrt{C_2 W^{E^2}+ C_1 W^{E}+ C_0}
\end{equation}
where
\begin{neweq_no}
    &A_0=\cfrac{a_{10}^2 - a_{10} a_{20} a_3 + a_0 a_3^2}{a_3^2},\qquad
    A_1= \cfrac{\alpha \bar{a}_{11} a_{10} a_{3}   + 
  \alpha\bar{a}_{11} a_{20} a_{3}-\hat{a}_{11} a_{10} a_{3}  - 2 \alpha \bar{a}_{11} a_{10}   + 2 \gamma \hat{a}_{11} a_{10} -  \gamma \hat{a}_{11} a_{20} a_{3}}{a_3^2}  \\
  & A_2=\cfrac{\bar{a}_{11}^2 (4 \beta^2  a_{10}+ a_{3}^2 (\beta^2-2 \alpha^2 ) + 
    a_{3} (2 \alpha^2 -\beta^2 a_{20} )) + 
 2 \hat{a}_{11}^2 a_{3} \gamma ( \gamma-a_{3}) + 
 2\alpha \bar{a}_{11} \hat{a}_{11}a_{3}  (a_{3} - 2 \gamma +  \gamma a_{3}) }{2a_3^3}\\
  &A_3=\cfrac{  \alpha \beta^2
 \bar{a}_{11}^3 a_3 + 4 \beta^2 \gamma \bar{a}_{11}^2 \hat{a}_{11}-\beta^2\bar{a}_{11}^2 \hat{a}_{11}a_3  - 4\alpha \beta^2 \bar{a}_{11}^3 }{2a_3^3}, 
  \qquad A_4=\cfrac{\bar{a}_{11}^4  \beta^4}{2 a_3^4},
  \\
   &B_0=\cfrac{ \beta \bar{a}_{11}  ( a_{20} a_{3} -2 a_{10}- a_{3}^2 ) }{2a_3^3},  \qquad
   B_1=\cfrac{\bar{a}_{11}  \hat{a}_{11} a_{3} \beta  + 2 \bar{a}_{11}^2 \alpha  \beta - 
 \bar{a}_{11}^2 a_{3} \alpha \beta  - 2 \bar{a}_{11}  \hat{a}_{11} \beta  \gamma  }{2a_3^3},\qquad
  B_2= \cfrac{-\bar{a}_{11}^3  \beta^3}{2a_3^4}  \\
  &C_0=4 a_{10} a_3,\qquad C_1=4\left(  \gamma \hat{a}_{11} a_3-\alpha   \bar{a}_{11} a_3  \right),\qquad C_2=\beta ^2 \bar{a}_{11}^2
    \end{neweq_no}

\subsection{Strong gamma distribution}
\label{appendix_B2}
The characteristic equation \eqref{gammaeq5} can be written as  
\begin{equation}\label{charac_strong_kernel}
   \Delta_2(\lambda):=\lambda^5+b_4\lambda^4+b_3 \lambda^3 +b_2 \lambda^2+b_1 \lambda+b_0=0
\end{equation}
where
\begingroup
\allowdisplaybreaks
\begin{align}\label{coeff_strong_kernel}
b_4&=1+2\gamma+\cfrac{1}{\tau_1},\qquad b_0=\cfrac{K_1\gamma^2 {I^*}^2}{\tau_1\tau_2}\nonumber\\
 b_3&:=b_{30}+b_{31}W^E:=\Bigg(\gamma^2+2\gamma+\cfrac{1+2\gamma}{\tau_1}+\cfrac{K_1  {I^*}^2}{\tau_1\tau_2}-
\cfrac{K_1K_2 \phi^{-1}(p)}{\tau_1 \phi(W^{IE}p)}\Bigg)+\Bigg( \cfrac{p\, K_1 K_2 }{\tau_1 \phi(W^{IE}p)} \Bigg)W^E \nonumber \\
    b_2&:=b_{20}+b_{21}W^E:=\Bigg(\cfrac{2\gamma+ (1+\tau_1)\gamma^2}{\tau_1}+
\cfrac{(1+2\gamma)K_1  {I^*}^2}{\tau_1\tau_2}-
\cfrac{2K_1K_2 \gamma \phi^{-1}(p)}{\tau_1 \phi(W^{IE}p)}\Bigg)\\
&\hspace{8.5cm}+ \Bigg( \cfrac{ 2p\gamma K_1K_2-K_1 r_k \gamma^2 \phi(W^{IE}p)   }{\tau_1 \phi(W^{IE}p)}   \Bigg)W^E \nonumber\\
    b_1&:=b_{10}+b_{11}W^E:= \Bigg(\cfrac{\gamma^2\tau_2+\gamma (2+\gamma) {I^*}^2 K_1 }{\tau_1\tau_2}-\cfrac{K_1K_2 \gamma^2\phi^{-1}(p)}{\tau_1 \phi(W^{IE}p)}\Bigg)   \nonumber\\
 &\hspace{8.5cm}+\Bigg( \cfrac{ K_1\gamma^2  \tau_2 \big(K_2 p - r_k \phi(W^{IE}p) \big)}{\tau_1 \tau_2\phi(W^{IE}p) }   \Bigg)W^E.\nonumber
\end{align}
\endgroup
Substituting $\lambda=i \sqrt{\omega}$ ($\omega>0$ and $i=\sqrt{-1}$) into the characteristic polynomial \eqref{charac_strong_kernel}  yields the following:
\begingroup
\allowdisplaybreaks
\begin{subequations}
\begin{align}
   \text{Real part: }&b_0 - b_2 \omega  + b_4 \omega^2=0\label{sk_real_part}\\
   \text{Imaginary part: }& \sqrt{\omega} \big(b_1  - b_3  \omega+\omega^2\big)=0\label{sk_img_part}.
\end{align}
\end{subequations}
\endgroup 
From \eqref{sk_img_part}, we have 
\begin{equation}\label{eq:0098}
    \omega=\cfrac{b_3+\sqrt{b_3^2-4b_1}}{2}.
\end{equation}
 Substituting $\omega$ defined in \eqref{eq:0098}   into equation \eqref{sk_real_part} gives 
\begin{equation}\label{eq_1198}
  b_0-\cfrac{b_2}{2}\left(b_3+\sqrt{b_3^2-4b_1} \right) +\cfrac{b_4}{4}\left(b_3+\sqrt{b_3^2-4b_1} \right)^2=0.
\end{equation}
Notice that equation \eqref{eq_1198} can be written as
\begin{equation}\label{eq_2298}
  4b_0-2b_2b_3+2b_4b_3^2-4b_1b_4=2\big( b_2-b_3b_4 \big)\sqrt{b_3^2-4b_1}
\end{equation}
Recall that $b_i$ ($i=1,2,3$) depends on $W^E$. Using $b_i=b_{i0}+b_{i1}W^E$ provided in \eqref{coeff_strong_kernel} and squaring both sides of equation \eqref{eq_2298} lead to
\begin{equation}\label{eq_3398}
   B_3W_E^3+ B_2W_E^2+B_1W_E+B_0=0
\end{equation}
where
\begingroup
\allowdisplaybreaks
\begin{align*}
B_3=&16 b_{11} b_{21} \Big(b_{21} - b_{31} b_{4}\Big)\\
B_2=&16 \Big(b_{11}^2 b_4^2 + \big(b_{10} b_{21} - b_0 b_{31}\big) \big(b_{21} - b_{31} b_4\big) - 
   b_{11} \big(-2 b_{20} b_{21} + b_{21} b_{30} b_4 + b_{20} b_{31} b_4\big)\Big)\\
B_1=&16 \Big(  b_{10} \big(2 b_{20} b_{21} - b_{21} b_{30} b_4 \textcolor{cyan}{-} b_{20} b_{31} b_4\big)  
+b_0 \big( 2 b_{30} b_{31} b_4 - b_{21} b_{30} - b_{20} b_{31} \big)\\
~&\hspace{6.8cm} +b_{11} \big(b_{20}^2 - b_{20} b_{30} b_4 - 2 b_4 \big[b_0 - b_{10} b_4\big] \big)\Big) \\
B_0=&16 \Big(b_0^2 - b_0 b_{20} b_{30} + b_0 b_4 \big( b_{30}^2-2 b_{10} \big)  + 
   b_{10} \big(b_{20}^2 - b_{20} b_{30} b_4 + b_{10} b_4^2\big)\Big)
\end{align*}
\endgroup
Since \eqref{eq_3398} always has one real solution, one Hopf bifurcation curve  is given by:
\begin{neweq}\label{strong_kernel_hopf_curve}
      W^E=W_{\rm Hopf}^{E} \Big(W^{IE}\Big):=&  \cfrac{\left(12B_3\sqrt{3B_4} -B_5\right)^{\frac{1}{3}}} {6B_3}
      - \cfrac{2\big(3B_3B_1-B_2^2\big)}{3B_3\left(12B_3\sqrt{3B_4} -B_5\right)^{\frac{1}{3}}}-\cfrac{B_2}{3B_3}
\end{neweq}
where
\begin{align*}
    B_4&=27B_3^2B_0^2 - 18 B_3 B_2 B_1 B_0 + 4 B_3 B_1^3 + 4 B_2^3 B_0 - B_2^2 B_1^2\\
    B_5&=108 B_3^2 B_0  + 36 B_3 B_2 B_1 - 8 B_2^3.
\end{align*}
There can be up to two other curves.

When $r_k=\alpha+i\beta$ is complex, we write 
\[b_{11}=\gamma \hat{b}_{11}-r_k \bar{b}_{11}\qquad\text{and}\qquad b_{21}= \hat{b}_{11}-r_k \bar{b}_{11}\]

\[\hat{b}_{11}=\cfrac{2p\gamma K_1K_2 }{\tau_1 \phi(W^{IE}p)}\qquad\text{and}\qquad
\bar{b}_{11}=\cfrac{\gamma^2 K_1}{\tau_1}.\]

Substituting $\lambda=i\omega$ ($\omega>0$ and $i=\sqrt{-1}$) into the characteristic polynomial \eqref{charac_strong_kernel}  yields the following:
\begingroup
\allowdisplaybreaks
\begin{subequations}
\begin{align}
   \text{Real part: }&b_0 +\beta \bar{b}_{11} W^E\omega+(\alpha \bar{b}_{11} W^E - \hat{b}_{11} W^E-b_{20})\omega^2+ b_4 \omega^4=0\label{sck_real_part}\\
   \text{Imaginary part: }& \omega \big(b_{10}+\gamma \hat{b}_{11} W^E-\alpha \bar{b}_{11}W^E+\beta\bar{b}_{11}W^E \omega-(b_{30}+b_{31}W^E)\omega^2 +\omega^4\big)=0\label{sck_img_part}.
\end{align}
\end{subequations}
\endgroup 
Following \cite{chavez2022complete} and using \textit{Wolfram Mathematica}, we solve \eqref{sck_img_part} for $\omega$ and get 
\[\omega_\pm( W^E):=-\frac{h_4}{2}\pm \frac{\sqrt{h_5-h_6}}{2}
\qquad\text{and}\qquad
\omega_\pm( W^E):=\frac{h_4}{2}\pm \frac{\sqrt{h_5+h_6}}{2}\]
where
\begin{neweq_no}
&~h_6=-\frac{2\beta \bar{b}_{11} W^E}{h_4},\qquad
h_5=\frac{4}{3}\left(b_{30}+b_{31}W^E\right)-h_3,\qquad
h_4=\sqrt{h_3+\frac{4}{3}\left(b_{30}+b_{31}W^E\right)}\\
&~ h_3=\frac{1}{3}\sqrt[3]{\frac{h_2}{2}}+\frac{1}{3}\sqrt[3]{\frac{2}{h_2}}
\left[ \left(b_{30}+b_{31}W^E\right)^2+12\left(  b_{10}-\alpha \bar{b}_{11}W^E+\gamma  \hat{b}_{11}  W^E \right)  \right]\\
&~ h_2=h_1+\sqrt{h_1^3-4\left[ \left(b_{30}+b_{31}W^E\right)^2+12\left(  b_{10}-\alpha \bar{b}_{11}W^E+\gamma  \hat{b}_{11}  W^E \right)  \right]^3}\\
&~ h_1=27\beta^2 \bar{b}_{11}^2 W^{E^2} - 72  \left(b_{30}+b_{31}W^E\right)
-12 \left(b_{30}+b_{31}W^E\right) \left(\alpha \bar{b}_{11} W^E-b_{10}-\gamma   \hat{b}_{11}  W^E \right)
\end{neweq_no}
Substituting $\omega_\pm( W^E)$ in \eqref{sck_real_part} leads to following implicit equation of $W^E$:
\begin{equation}
  b_0 +\beta \bar{b}_{11} W^E\omega_\pm( W^E)+(\alpha \bar{b}_{11} W^E - \hat{b}_{11} W^E-b_{20})\omega_\pm( W^E)^2+ b_4 \omega_\pm( W^E)^4=0
\end{equation}

\section{Accuracy of The Numerical Approach}
\label{appendix_NumericalAccuracy}
To verify the accuracy of the numerical approach described in subsection~\ref{sec:numbif} for finding the curves of pure imaginary eigenvalues,  we used Maple to solve \eqref{eq:WEhopf}-\eqref{eq:omega} with the parameter values in \eqref{params} for the cases where we could find explicit expressions for these curves. Recall that we have $r_k=1$, $r_k=\cos(\frac{2\pi}{N})$ in the bi-directional ring and $r_k=\cos(\frac{2\pi}{N})+i\sin(\frac{2\pi}{N})$ in the unidirectional ring. When there is no delay in the system,  the curves are found by solving \eqref{eq:omeganodelay} for the real value(s) of $\omega$ and then substituting into \eqref{eq:WEhopfnodelay}. For the weak gamma distribution in the bi-directional case the curve is given explicitly by \eqref{weak_kernel_hopf_curve}. 
The results are shown in Figure~\ref{Fig_comparison_plots}.
For the parameter values we considered, in the unidirectional case we found two values for $\omega$ and hence two curves, shown by the solid red and orange curves in  Figure~\ref{Fig_comparison_plots}. In the bidirectional case and for $r_k=1$ there was only one curve as predicted by the analysis.
\begin{figure}[H]
     \centering
   \includegraphics[width=1\textwidth]{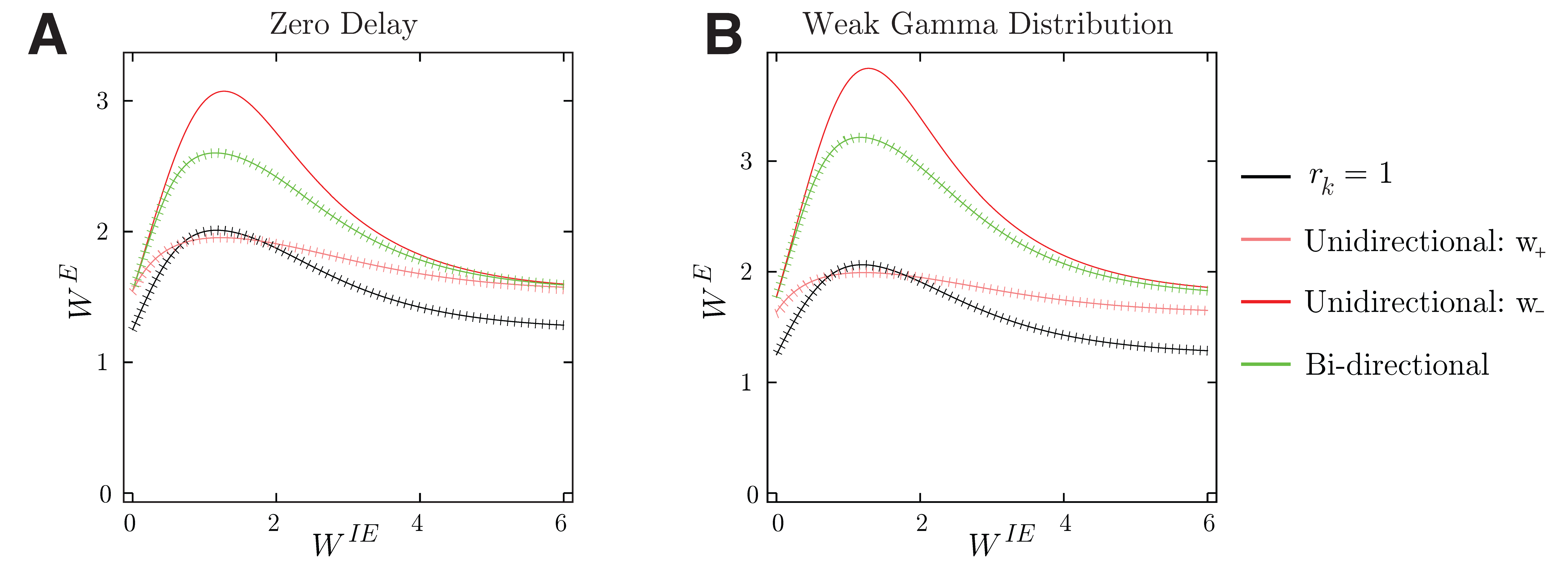}
      \caption{Solid curves are exact solutions of equations for the curves of pure imaginary eigenvalues and dashed curves are numerical results. \textbf{(A)} $N=10$, no delay; \textbf{(B)} $N=8$, weak gamma distributed delay.} 
      \label{Fig_comparison_plots}
\end{figure}

\eject
\section{Hopf-Hopf Bifurcation Analysis of the System with Zero Delay}\label{appendix_HopfHopf}
We used Matcont\cite{dhooge2003matcont} to perform numerical bifurcation analysis of system \eqref{eq1} with $N=10$ and zero delay. The results are shown in Figure~\ref{Fig_Hopf_coefficient}
\begin{figure}[H]
     \centering
   \includegraphics[width=1\textwidth]{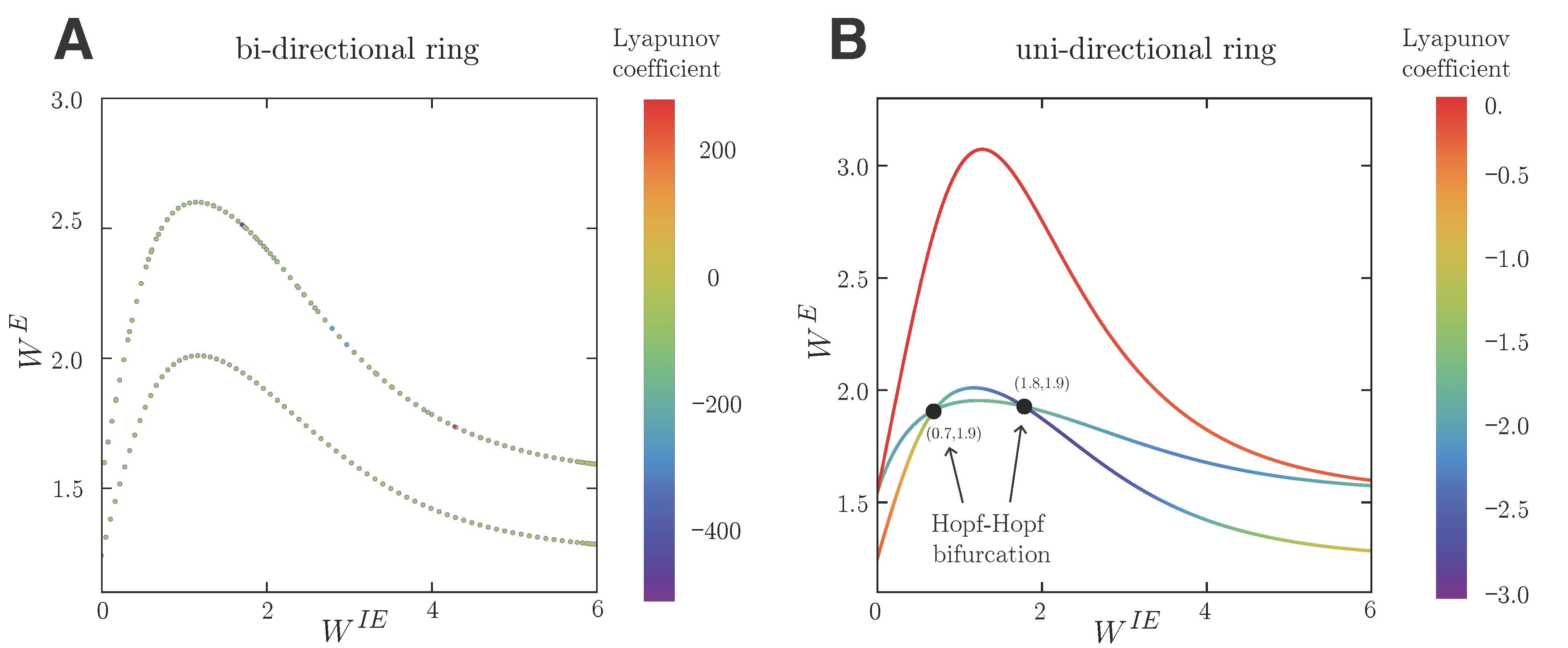}
      \caption{The sign of the Hopf coefficient along the Hopf branches in the zero delay case.}
      \label{Fig_Hopf_coefficient}
\end{figure}

Matcont gives the coefficients for the Hopf-Hopf point in the rescaled amplitude equations of the normal form near a Hopf-Hopf point \cite[Equation (8.111)]{kuznetsov1998elements}:  
\begin{align}
   \xi_1'&= \xi_1\big(\mu_1-\xi_1-\theta\, \xi_2+\Theta\, \xi_2^2      \big),\label{HopfHopfbifurcation_1}\\
   \xi_2'&=\xi_2\big(\mu_2-\delta\,\xi_1- \xi_2+\Delta\, \xi_1^2      \big).\label{HopfHopfbifurcation_2}
\end{align}
For our model with $N=10$ at the points indicated in Figure~\ref{Fig_Hopf_coefficient} the values are given in Table~\ref{tab:HH}.
\begin{table}[H]
\centering
\begin{tabular}{|c|c|c|c|c|c|}
\hline
Hopf-Hopf point  & ${\rm sign}(\nicefrac{p_{11}}{p_{22}})$  &  $\theta$  &   $\delta$  &   $\Theta$  &   $\Delta$  \\ \hline
    $(0.692,1.911)$   & positive & $0.895$ & $1.338$ & $-47.0596$ & $-156.606$ \\ \hline
  $(1.780,1.928)$  & positive & $0.4217$ &  $2.8168$ &  $-12.1523$ & $-302.628$ \\ \hline
\end{tabular}
\caption{Coefficients of the normal form \eqref{HopfHopfbifurcation_1}-\eqref{HopfHopfbifurcation_2} for the Hopf-Hopf points.}
\label{tab:HH}
\end{table}
Since both Hopf bifurcations are supercritical everywhere, as shown in Figure~\ref{Fig_Hopf_coefficient}, $p_{11},p_{22}<0.$
Thus we have $0<\theta<\delta$ and $\theta\delta>1$.
This corresponds to case II in \cite[Figures 8.25 \& 8.26]{kuznetsov1998elements}, which we illustrate in Figure~\ref{fig:HHschem}. In particular, there is a stable torus existing between two curves of Neimark-Sacker (torus) bifurcation.
\begin{figure}[tb]
\centering
\includegraphics{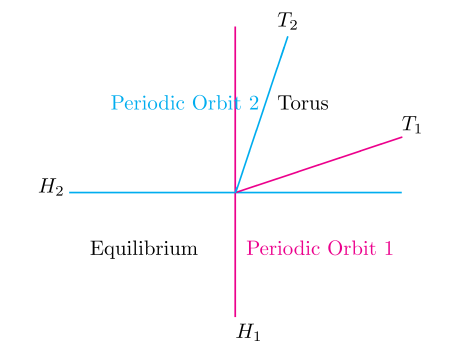}
    \caption{Schematic of bifurcations and stable solutions near Hopf-Hopf point. $H_j$: Hopf bifurcation leading to stable periodic orbit $j$. $T_j$: Torus (Neimark-Sacker) bifurcation of periodic orbit $j$ leading to stable torus.}
    \label{fig:HHschem}
\end{figure}

\end{document}